\numberwithin{equation}{section}
\numberwithin{figure}{section}
\newtheorem{theorem}{Theorem}[section]
\newtheorem*{theorem*}{Theorem}
\newtheorem{lemma}[theorem]{Lemma}
\newtheorem{proposition}[theorem]{Proposition}
\newtheorem{definition}[theorem]{Definition}
\newtheorem{example}[theorem]{Example}
\newtheorem{rem}{Remark}[section]
\newcommand{\sign}{\text{sign}}
\DeclareMathOperator{\interior}{int}
\DeclareMathOperator{\diam}{diam}
\renewcommand{\d}[1]{\ensuremath{\operatorname{d}\!{#1}}}
\pgfplotsset{compat=1.18}
\begin{document}

\title{Adapted Measures for Markov Interval maps}
\author{\L{}ukasz Krzywo\'{n}}
\date{\today}
\thanks{This work is supported by the National Science Foundation under Award No.~NSF DMS-2154378.}

\begin{abstract}

Adapted invariant measures, such as the natural area measure (Liouville), have a central place in the development of ergodic theory for billiards.
These measures ensure local Pesin charts can be constructed almost everywhere even in the nonuniformly hyperbolic setting.
Recently, for Sinai billiards satisfying certain conditions, the unique measure of maximal entropy has been shown to be adapted. 
However, not all positive entropy measures are.
To investigate the connection between entropy and adaptedness, we examine Markov interval maps with exactly one singularity.
We prove that a condition relating the entropy of the map and the ``strength" of the singularity determines if the measure of maximal entropy is adapted with respect to this singularity.
We also show that under a H\"{o}lder condition, recurrence of the singularity is necessary to have nonadapted invariant measures.
\end{abstract}

\maketitle

\section{Introduction}\label{Intro}

In the setting of hyperbolic dynamics with discontinuities, a standard construction of the stable and unstable manifolds at a point requires a condition that has been termed ``adaptedness".
Roughly speaking, an invariant measure is adapted if it does not give too much weight to neighborhoods of the discontinuities.
As an example, for Sinai billiards, the discontinuities are also one sided singularities in the sense that derivative of the billiard map is unbounded near the singularity.\footnote{We will use ``singularity" only for when the derivative of a map is unbounded near a point, whereas discontinuity will have its common meaning. Definition \ref{singularity} defines which points are singularities and Definition \ref{adaptedmeasure} specifies which measures are adapted in the Markov interval map setting.}
For this dynamical system the natural invariant area measure (Liouville) is adapted \cite{KS}.
Baladi and Demers in \cite{BD} have shown, under a condition of sparse recurrence to singularities, that the measure of maximal entropy (MME) for a Sinai billiard map is unique and adapted.
Work in progress by Climenhaga and Day suggests that uniqueness extends to all Sinai billiards, but without sparse recurrence, it is unknown whether the MME is adapted or not.
Thus, it is natural to ask under what conditions the MME is adapted.
To this end we examine Markov interval maps with one singularity and show that adaptedness with respect to this singularity is related to the topological entropy and a H\"{o}lder exponent bounding the ``strength" of the singularity.
See Remark \ref{rem2} for a connection to the sparse recurrence condition in \cite{BD}.

\begin{restatable*}{theorem}{one}
\label{thm:one}
Let $I = [0,1]$ and let $f\colon I \to I$ be a piecewise $C^1$ uniformly expanding transitive Markov map.\footnote{These conditions imply that there exists a unique measure of maximal entropy for $(I,f)$.} Suppose there exists $\delta >0$ and $\alpha >1$ such that the interval map is defined by $f(x)=x^{1/\alpha}$ on $[0,\delta]$ and has no other singularities. 
    Then the MME for $(I,f)$ is adapted with respect to $0$ if and only if $h_\mathrm{top}(f) > \log(\alpha)$.
\end{restatable*}

Climenhaga, Demers, Lima, and Zhang also analyze adapted and nonadapted measures for billiard maps in \cite{CDLZ}.
They construct a nonadapted measure with positive entropy for billiard maps with a periodic orbit that has a single grazing point.
This periodicity (and in particular recurrence) is essential for their argument.
To explore what happens without recurrence, observe that if an interval map has a nonrecurrent singularity, then the closer a point is to the singularity, the longer its orbit stays away from a neighborhood of the singularity.
This seems to indicate that any invariant measure is adapted, which, if the singularity is not too ``strong", is true.

\begin{restatable*}{theorem}{three}
\label{thm:three}
 Let  $f\colon I \to I$ be a piecewise $C^1$ uniformly expanding Markov map such that 
 \begin{enumerate}
     \item $f$ has a singularity, $p^+ \in B'$, see \eqref{endpoints}, and no other singularities,
     \item $p^+$ is not periodic with respect to $(\tilde{I}, \tilde{f})$, see \eqref{tildemap} and Definition \ref{persing}, 
     \item $f$ is H\"{o}lder continuous at $p^+$, see Definition \ref{holder}.
 \end{enumerate}
 Then, every $f$-invariant measure on $I$ is adapted with respect to $p$.
\end{restatable*}
In Example \ref{nonpolynonadapt} we construct an interval map in this setting that satisfies condition $(1)$ and $(2)$, but not $(3)$, such that the MME is nonadapted.

In the mid 1980s, Katok and Strelcyn \cite{KS} modified Pesin theory to the case of uniform hyperbolicity with singularities.
Lima and Sarig \cite{LS} applied this work to Poincar\'{e} sections for 3-dimensional flows that are ``adapted" to a given invariant probability measure.
In our setting, we are using ``adaptedness" of invariant measures for discrete time systems as introduced by Lima and Matheus in \cite{LM} which we define below (Definition \ref{adaptedmeasure}).
However, as explained below in Remark \ref{p-adaptedness}, we are only treating adaptedness with respect to singularities.

\subsection{Acknowledgments}

I would like to thank my advisor Vaughn Climenhaga for helping me with uncountably many drafts, Fan Yang and Stefano Luzzatto for helpful comments on the geometric Lorenz models, and Dmitry Dolgopyat for interesting questions and comments regarding dimensions of measures.

\subsection{Outline}
In Section \ref{PreDef}, we will define our terms and how we handle some of the technical pieces of Markov interval maps.

In Section \ref{results}, we will state the main results and make some remarks on how they relate to billiards.

In Section \ref{coding}, we will construct a coding for Markov interval maps and identify one of the main tools, Gibbs bounds on the MME.

In Section \ref{proofs}, we will prove our main results.

 Section \ref{examples} is split into three parts.
 First, we will construct some example interval maps that highlight the limitations of the main results and need for certain conditions.
Second, we will show how the results relate to a dimension of ergodic invariant measures.
Finally, we will apply our main results to interval maps induced by geometric Lorenz models.

\section{Preliminary Definitions and Examples}\label{PreDef}

The following discussion draws heavily from \cite[Chapter 4.3]{PY}.
Let $f\colon I \to I$ be a piecewise $C^1$ uniformly expanding map on a closed bounded interval $I \coloneq [a,b] \subset \mathbb{R}$. 
That is, there exists $\lambda > 1$ and a set
\begin{equation}\label{subint}
  B = \{a=x_0<x_1< ...< x_m=b\} \subset I
\end{equation}
 defining subintervals, $I_i \coloneq [x_i, x_{i+1}]$, such that 
\begin{enumerate}
    \item[(A)] on each $I_i$, there exists a continuous monotonic $\tilde{f}_i \colon I_i \to I$, satisfying
    \begin{equation}\label{extension}
        f|_{\interior(I_i)} = \tilde{f}_i|_{\interior(I_i)},
    \end{equation}
    \item[(B)] for each $x_i$, $f(x_i) = \lim_{x \to x_i^+}f(x)$ or $f(x_i) = \lim_{x \to x_i^-}f(x) ,$
    \item[(C)] (Expanding) $f|_{\interior(I_i)}$ is $C^1$ with $|(f|_{\interior(I_i)})'| \geq \lambda$ for each $ i \in \{0, ..., m-1\}$,
    \item[(D)] (Markov) $\tilde{f}_i(I_i) = \bigcup_{j \in V(i)} I_j$ for some $V(i) \subset \{0, ..., m-1\}$.
\end{enumerate}

Because $f$ is not a continuous map, we do not have a ``topological" entropy as it is usually defined.
However, we do have a whole space, $\mathcal{M}_f$, of $f$-invariant Borel probability measures.
For $\mu\in \mathcal{M}_f$, let $h_{\mu}$ denote the measure theoretic entropy of $(I,f,\mu)$.
By recalling the variational principle for a continuous map, we can define the topological entropy to be the supremum of the measure theoretic entropies, let   $h_\mathrm{top}(f) \coloneq \sup_{\mu \in \mathcal{M}_f}h_{\mu}.$

\begin{definition}
   If there exists an $f$-invariant Borel probability measure $\mu$ on $I$ such that $h_{\mu}=h_\mathrm{top}(f)$ then $\mu$ is called a measure of maximal entropy (MME).
\end{definition}

 \begin{figure}
    \centering
   \begin{tikzpicture}

\begin{axis}[axis lines = left, axis equal image, width = 7cm, xlabel = \(x\), xtick={0,.5,1},
    ytick={0,.5,1}, clip = false,]

    \addplot [domain=.5:.6, samples=80, color=blue,]{-1.58*sqrt(x-.5)+.5};
    \addplot [color=blue, mark = none, smooth,] coordinates{(.6, 0) (1,1)};
    \addplot[color=blue, mark=none,dashed,] coordinates{(0,0) (1,1)};
    \addplot[color=blue, mark=none,smooth,] coordinates{(0,1) (.25,0)};
    \addplot[color=blue, mark=none,smooth,] coordinates{(.25,0) (.5,.5)};
    \node[circle,fill,inner sep=1.5pt] at (axis cs:.5,.5) {};
    \addplot[color=blue, mark=none,dashed,] coordinates{(.5,.5) (.5,0)};
\end{axis}
\end{tikzpicture}
    \caption{Periodic and singular but not a periodic singularity}%
    \label{fig:counter}%
\end{figure}
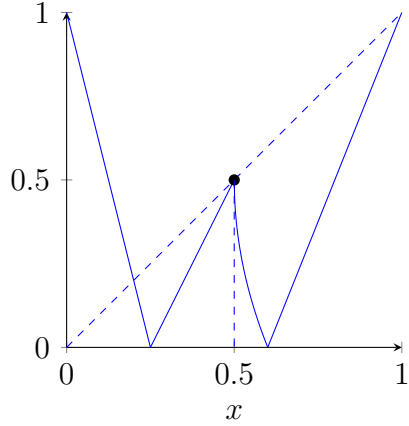
The map $f$ is piecewise monotonic but it is not necessarily orientation preserving.
In order to define periodic or non-periodic singularities, it will matter if $f$ changes the orientation of intervals. 
For example, the graph in Figure \ref{fig:counter} shows a continuous map with a fixed point that has a singularity.
However, this fixed point is actually a nonrecurrent singularity.
This is because the map does not send any right neighborhood of the singularity into another right neighborhood of the fixed point.
In fact, any invariant measure for this map is adapted with respect to the singularity by Theorem \ref{thm:three}.
In order to formulate our results precisely, we will keep track of the orientation of iterates of one-sided neighborhoods of the singularity.

Thus, we will define a related dynamical system $(\tilde{I}, \tilde{f})$.
Recall our set of endpoints, $B$ (\ref{subint}), and let $\tilde{B} = \bigcup_{n \geq 0} f^{-n}(B)$.
Now consider the set $\tilde{B} \times \{-1,1\}$.
With the notation of one-sided limits in mind, we will write $x^+ \coloneq (x,1)$ and $x^- \coloneq (x,-1)$ for $x \in \tilde{B}$.
Define $\tilde{B}^- = \{ x^- : x \in \tilde{B} \setminus \{a,b\}\}$ and  $\tilde{B}^+ = \{ x^+ : x \in \tilde{B}\setminus \{a,b\}\}$.
Thus, we may combine the disjoint sets to define
\[ \tilde{I} = (I \setminus \tilde{B}) \cup\{a,b\} \cup \tilde{B}^- \cup \tilde{B}^+.\] 
Let $\tilde I$ be given the order topology for the lexicographic order.
Let $\tilde{f}\colon \tilde{I} \to \tilde{I}$ be defined such that $\tilde{f}$ is continuous, piecewise monotonic, and for all $x \in (I\setminus \tilde{B}) \cup \{a,b\},$ $\tilde f(x) = f(x).$
This uniquely determines the map $\tilde f$ to satisfy the following description.

\begin{equation}\label{tildemap}
\begin{cases} 
      \tilde{f}(x) = f(x) & x \in (I\setminus \tilde{B}) \cup \{a,b\} \\
      \tilde{f}(x_i^-) = \left(\lim_{x \to x_i^-}f(x), -1 \cdot\sign(\tilde{f}_i')\right) & 1 \leq i \leq m-1 \\
      \tilde{f}(x_i^+) = \left(\lim_{x \to x_i^+}f(x), \sign(\tilde{f}_i')\right) & 1 \leq i \leq m-1 \\
      \tilde{f}(c^-) = \left(f(c), -\sign(\tilde{f}_{j(c)}'(c))\right) & c^- \in   \tilde{B}^- \\
      \tilde{f}(c^+) = \left(f(c), \sign(\tilde{f}_{j(c)}'(c))\right) & c^+ \in   \tilde{B}^+.
   \end{cases}
\end{equation}
Let $\pi_1 \colon\tilde{I}\to I$ be defined by 
\begin{equation}\label{proj}
\begin{cases} 
      \pi_1(x) = x & x \in (I\setminus \tilde{B}) \cup \{a,b\} \\
      \pi_1(x^-) = x & x^- \in   \tilde{B}^- \\
      \pi_1(x^+) = x & x^+ \in   \tilde{B}^+.
   \end{cases}
\end{equation}
Also, if we remove the countable set $\tilde{B}$ from each, then $(\tilde{I}\setminus \tilde{B},\tilde{f})=(I\setminus\tilde{B},f)$.
Thus, there is a correspondence between positive entropy measures on the two systems.
Hence, $h_\mathrm{top}(f)=h_\mathrm{top}(\tilde{f})$.
Since our construction uniquely determined $(\tilde{I},\tilde{f})$ from $(I,f)$, we will freely pass between the two when convenient.
We now consider the set
\begin{equation}\label{endpoints}
    B' = \{a=x_0, x_1^-,x_1^+, ..., x_{m-1}^-,x_{m-1}^+,x_m=b\} \subset \tilde{I}.
\end{equation}
in order to define singularities.
\begin{definition}\label{singularity}
    We call a point $p^+ \in B'$ a  singularity of $f$ if $\limsup_{x \to p^+}|f'(x)| = \infty $ and a point $p^- \in B'$ a singularity of $f$ if $\limsup_{x \to p^-}|f'(x)| = \infty $.
\end{definition}
We will only consider maps with one singularity, $x_i^+$. Note that in this convention $x_0^+ = x_0 =a$.
The reason for only considering left endpoints is that $(I,f)$ is conjugate to $(-I, -f)$ by the homeomorphism $h(x) =-x$, which would change a right endpoint into a left endpoint.

\begin{definition}\label{persing}
    Let $p^+ \in B'$ be a singularity of $f$. We say $p^+$ is a periodic singularity if there exists an $n \in \mathbb{N}$ such that $\tilde{f}^n(p^+) = p^+$.
    The minimum such $n$ is the period of the singularity.
    Otherwise, $p^+$ is a non-periodic singularity.
\end{definition}
If $f$ is Markov, endpoints must go to endpoints. Thus, the orbit of a singularity must be eventually periodic.
We now define adaptedness with respect to the singularity, $p^+$.

\begin{definition}\label{adaptedmeasure}
    Suppose $f \colon I \to I$ has a singularity $p^+ \in B'$. Let $I_p^+ = \{x \in I : x>p\}$ and define $b \colon I \setminus\{p\} \to \mathbb{R}$ by $b(x) = \mathbbm{1}_{I_p^+}(x)|\log(x-p)|$. 
    An $f$-invariant Borel probability measure $\mu$ is called adapted with respect to $p$, or $p$-adapted, if $\mu(\{p\}) = 0$ and $\int_{I_p^+} b(x)\d \mu(x) < \infty$, and $p$-nonadapted otherwise. 
\end{definition}

\begin{rem}\label{p-adaptedness}
    Let us highlight that our definition of adaptedness is with respect to a single point, not the whole set of discontinuities.
    If $\mu$ is $p$-adapted, then by the Birkhoff Ergodic Theorem, \[\lim_{n \to \infty}\frac{b\circ f^n(x)}{n}  =0\] for $\mu$ almost every $x \in I$.
    That is, for $\mu$ almost every $x$ and for all $\epsilon >0$ there exists $c_x>0$ such that $f^n(x) \notin [p, p+c_xe^{-\epsilon n})$ for all $n \in \mathbb{N}$.
    It is also important to note that, as Example \ref{nonsing} shows, singularities are not necessary for a measure to be nonadapted with respect to a given discontinuity point.
    For the rest of this work, ``adapted" means ``adapted with respect to $p$" or ``$p$-adapted".
    We may at times write the latter for emphasis.
\end{rem}

\begin{figure}
  \centering
  \begin{subfigure}{.3\linewidth}
    \centering
    \begin{tikzpicture}
\begin{axis}[
    axis lines = left, 
    axis equal image, 
    width = 6cm,
    xlabel = \(x\),
    clip = false,
    xtick={0,.5,1},
    ytick={0,.5,1},
]
    \addplot [domain=0:.11, samples=100, color=blue, ]{sqrt(x)};
    \addplot [ color=blue, mark = none, smooth,] 
        coordinates{(.11, {sqrt(.11)}) (.5,1)};
    \node[circle,fill,inner sep=1.5pt] at (axis cs:0,0) {};
    \addplot[domain=.5:1, samples=100, color=blue,]{2*x-1};
\end{axis}
\end{tikzpicture}
    \caption{Fixed Singularity}
  \end{subfigure}%
  \hspace{1.7em}%
  \begin{subfigure}{.3\linewidth}
    \centering
    \begin{tikzpicture}

\begin{axis}[axis lines = left, axis equal image, width = 6cm, xlabel = \(x\), xtick={0,.5,1}, ytick={0,.5,1}, clip = false,]

    \addplot [domain=.3:.46, samples=100, color=blue,]{sqrt(x-.3)};
    \addplot [color=blue, mark = none, smooth,] coordinates{(.46, {sqrt(.16)}) (1,1)};
    \addplot[color=blue, mark=none,dashed,] coordinates{(0,0) (1,1)};
    \addplot[color=blue, mark=none,smooth,] coordinates{(0,.3) (.3,1)};
    \node[circle,fill,inner sep=1.5pt] at (axis cs:.3,0) {};
    \node[circle,fill,inner sep=1.5pt] at (axis cs:0, .3) {};
    \addplot[color=blue, mark=none,dashed,] coordinates{(0,.3) (.3,.3)};
    \addplot[color=blue, mark=none,dashed,] coordinates{(.3,0) (.3,.3)};
\end{axis}
\end{tikzpicture}
    \caption{Periodic Singularity}
  \end{subfigure}%
  \hspace{1.7em}%
  \begin{subfigure}{.3\linewidth}
    \centering
    \begin{tikzpicture}
\begin{axis}[axis lines = left, axis equal image, width = 6cm, xlabel = \(x\), xtick={0,.5,1},
    ytick={0,.5,1}, clip = false,]
    \addplot [domain=.5:.61, samples=100, color=blue,]{sqrt(x-.5)};
    \addplot [ color=blue, mark = none, smooth,] coordinates{(.61, {sqrt(.11)}) (1,1)};
    \node[circle,fill,inner sep=1.5pt] at (axis cs:.5,0) {};
    \addplot [domain=0:.5, samples=100, color=blue, ]{2*x};
\end{axis}
\end{tikzpicture}
    \caption{Nonperiodic Singularity}
  \end{subfigure}
  \caption{Interval maps with singularity marked with a dot.}
  \label{fig:examples}
\end{figure}
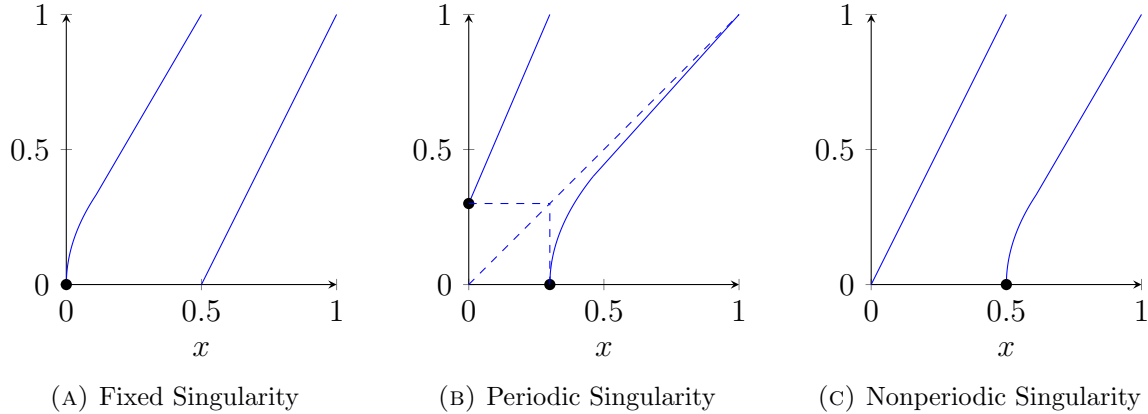

Figure \ref{fig:examples} shows examples of the interval maps we consider.
The first graph depicts a dynamical system that is conjugate to the doubling map but has a fixed singularity at $0$ coming from $f(x) = \sqrt{x}$ for small $x$.
The MME for this system is nonadapted by Theorem \ref{thm:one}. 
The second graph shows a related case where the singularity is periodic. 
The first example is a special case of Theorem \ref{thm:one}, and the second example is a special case of Theorem \ref{thrm2}.
Finally, the third graph shows a case when the singularity is not periodic. 
This case is conjugate to the first case, but due to the nonrecurrence of the singularity, we expect invariant measures to be adapted.
This is because the orbits of points sufficiently close to the singularity are near the orbit of the singularity which would limit the amount of time a nonwandering orbit stays near the singularity.
We will see in Theorem \ref{thm:three} that every invariant measure is adapted if $f$ is H\"{o}lder continuous near the singularity (Definition \ref{holder}).
Thus, for non-periodic singularities of the form $(x-p)^{1/\alpha}$, not only is the MME adapted, but so is every other invariant measure.

\begin{definition}
    \label{holder}
    We say $f$ is H\"{o}lder continuous near a singularity $p^+ \in B'$ if the map $\tilde{f_j}$ \eqref{extension} corresponding to $\tilde{I_j} = [p, x_{j+1}]$ is H\"{o}lder continuous.
\end{definition}

\section{Results}\label{results}
Given conditions on the strength of the singularity for an interval map and its entropy, we are able to determine whether or not the MME is adapted.

\one

When $h_\mathrm{top}(f) \neq \log(\alpha)$, Theorem \ref{thm:one} is a special case of Theorem \ref{thrm2}.
We prove the case $h_\mathrm{top}(f) = \log(\alpha)$ in Section \ref{Pthrm1}.
This value, $\alpha$, is a parameter controlling the ``strength" of the singularity or steepness of the map $f$ near the singularity.
This parameter can be recovered from the derivative of $f$ near the singularity in the following way.
For the class of functions defined in Theorem \ref{thm:one}, $\frac{\log(f'(x))}{\log(x)} \approx \frac{1}{\alpha}-1$ for $x \approx 0$.
In fact,
\[ \lim_{x \to 0^+} \frac{\log(f'(x))}{-\log(x)} = 1- \frac{1}{\alpha} \in (0,1).\]
Let us enlarge the class of maps we are considering to allow the singularity to not be exactly $x^{1/\alpha}$ and allow the singularity to be at any $p \in [0,1)$ and let us define
\begin{equation}\label{loglim}
       L(x) = \frac{\log((f^n)'(x))}{-\log(x-p)},\quad \quad   \overline{\beta} = \limsup_{x \to p^+}L(x),  \quad \quad \underline{\beta} = \liminf_{x \to p^+}L(x).
   \end{equation}
Under the stronger assumption of Theorem \ref{thm:one}, that $f(x) = x^{1/ \alpha}$ on $[0,\delta]$, we have $\overline \beta = \underline \beta = 1 - \alpha^{-1} \in (0,1).$
In general, $0 \leq \underline \beta \leq \min(1,\overline \beta)$.
Indeed, suppose by way of contradiction that $\underline\beta > 1$.
Hence, there exists a $\delta >0$ such that for every $x \in (0, \delta)$, we have $\frac{\log(f'(x))}{-\log(x)} > 1$, so $f'(x) > \frac{1}{x}$.
But then, 
\[f(\delta)-f(0) = \int_{(0,\delta)}f'(x)\d x = \infty,\] which contradicts the fact that $f$ is an interval map.
Thus, $\underline\beta \leq 1$.
The values $\underline \beta,\overline \beta$ determine an interval, describing the strength or steepness of the singularity.
We now formulate the main result of this work.

\begin{theorem}{\label{thrm2}}
   Let $I = [0,1]$ and let $f \colon I \to I$ be a piecewise $C^1$ uniformly expanding Markov map with a periodic singularity, $p^+ \in B'$, of period $n$.  
   Let $\underline\beta, \overline\beta$ be defined as in \eqref{loglim}.
   We also assume that $f$ has no other singularities.
   Then, for $I_J$, the transitive component of $f$, see Section \ref{coding}, containing the singularity and $h$ the topological entropy of $(I_J,f)$, the following hold.
    \begin{enumerate}
        \item If $\overline \beta <1$ and $h > -\frac{1}{n}\log(1-\overline{\beta})$, then the MME for $(I_J,f)$ is $p$-adapted.
       \item If $\underline \beta <1$ and $h < -\frac{1}{n}\log(1-\underline{\beta})$, then the MME for $(I_J,f)$ is $p$-nonadapted.
       \item When $\underline\beta = 1$ the MME for $(I_J,f)$ is $p$-nonadapted.
   \end{enumerate}
\end{theorem}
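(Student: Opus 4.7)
The plan is to translate the adaptedness integral over a right neighborhood of $p$ into a geometric series whose convergence is governed by the Gibbs weight $e^{-nh}$ balanced against the growth rate of $|\log r_k|$, where $r_k$ are successive preimages of $\delta$ under the period-$n$ branch of $f^n$ fixing $p$. Since $p^+$ has period $n$, on a small right neighborhood $[p, p+\delta]$ the map $g := f^n$ is a single uniformly expanding branch with $g(p) = p$. Setting $r_0 = \delta$ and defining $r_k$ by $g(p+r_k) = p + r_{k-1}$, each $[p, p+r_k]$ is an $(nk)$-cylinder of $f$ in the symbolic coding of Section \ref{coding}, so the Gibbs bound from that section gives $\mu([p, p+r_k]) \asymp e^{-nkh}$. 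Partitioning $(p, p+\delta]$ into annuli $A_k := (p+r_{k+1}, p+r_k]$, the estimate $\mu(A_k) \asymp e^{-nkh}$ together with $|\log(x-p)| \asymp |\log r_k|$ on $A_k$ gives
\[
\int_{I_p^+} |\log(x-p)|\, d\mu(x) \asymp \sum_{k \ge 0} |\log r_k|\, e^{-nkh}.
\]

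The geometric growth of $|\log r_k|$ is then controlled by the singularity exponents. For every $\epsilon > 0$, the definitions of $\underline\beta, \overline\beta$ give the two-sided bound $(x-p)^{-(\overline\beta + \epsilon)} \ge (f^n)'(x) \ge (x-p)^{-(\underline\beta - \epsilon)}$ on a small enough right neighborhood of $p$. Integrating produces $g(p+r) - p \asymp r^{1-\beta^*}$ with $\beta^*$ equal to $\overline\beta + \epsilon$ or $\underline\beta - \epsilon$; inverting the resulting recursion yields $|\log r_k| \le C_\epsilon (1-\overline\beta-\epsilon)^{-k}$ from the upper estimate and $|\log r_k| \ge c_\epsilon (1-\underline\beta+\epsilon)^{-k}$ from the lower one, with the second bound degenerating to $|\log r_k| \ge c_\gamma \gamma^k$ for arbitrary $\gamma > 1$ when $\underline\beta = 1$ (since then $1/(1-\underline\beta+\epsilon) = 1/\epsilon$ is unbounded). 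Feeding these back into the series, case (1) produces convergence of $\sum (1-\overline\beta-\epsilon)^{-k} e^{-nkh}$ under the hypothesis $h > -\frac{1}{n}\log(1-\overline\beta)$ by taking $\epsilon$ small, while cases (2) and (3) yield divergence symmetrically.

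The main obstacle is controlling the Gibbs constants for the nested cylinders $[p, p+r_k]$ right up to the singularity, since standard bounded-distortion arguments can degrade as $r_k \to 0$. The reduction to a single inverse branch of $g = f^n$ with $p$ in its closure is designed to sidestep this: each application of the inverse branch contributes one bounded-distortion step, so the symbolic Gibbs estimate of Section \ref{coding} can be pulled back to the geometric cylinders $[p, p+r_k]$. A secondary technical care is needed because $\underline\beta, \overline\beta$ are only liminf and limsup, so the pointwise uniform bounds on $(f^n)'$ only hold once $\delta$ is chosen small relative to $\epsilon$, and this choice must remain compatible with the Markov branch structure used to define the $r_k$. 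Verifying that $\mu(A_k)$ is truly comparable to (and not dominated by) $\mu([p, p+r_k])$ is a further detail, handled by expansion of $g$ together with the Gibbs lower bound.
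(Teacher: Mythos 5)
Your proposal is correct and follows essentially the same route as the paper: reduce to the $n$-th iterate near the periodic singularity, turn the bounds $\underline\beta,\overline\beta$ on $\log (f^n)'/(-\log(x-p))$ into power-law estimates for the return branch, and weigh the resulting exponential growth of $|\log(x-p)|$ on the depth-$k$ annuli against the Gibbs weight $e^{-nkh}$ of the corresponding cylinders (your annuli $A_k$ are exactly the paper's escape-time sets, and the sandwiching of $[p,p+r_k]$ between cylinders of lengths $nk$ and $n(k+M)$ is the paper's constant $M$, with the set $E$ of admissible exit symbols supplying the Gibbs lower bound). The only presentational difference is that you measure length-$nk$ cylinders of $f$ directly rather than passing to $g=f^n$ and invoking the paper's measure correspondence (Lemma \ref{adapted}); this is a harmless simplification, not a different argument.
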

Theorem \ref{thrm2} is proved in Section \ref{Pthrm2}.
One interpretation shared by Theorems \ref{thm:one} and \ref{thrm2} is that if we have a periodic singularity, then the MME will be adapted if the dynamical system has enough entropy.
What determines how much entropy is ``enough" depends on the singularity and is captured by the quantities in \eqref{loglim}.
There is one difference, however. 
In Theorem \ref{thm:one} there is no indeterminacy, but in Theorem \ref{thrm2} there is.
If the topological entropy is in the middle interval, as follows
\begin{equation}\label{indeterm}
-\frac{1}{n}\log(1-\underline\beta) \leq h \leq -\frac{1}{n}\log(1-\overline\beta), 
\end{equation}
then Theorem \ref{thrm2} is indeterminate.
That is, additional information about $f$ would need to be known to determine whether or not the MME for $(I_J, f)$ is $p$-adapted.
Example \ref{eqadapt} constructs a map where $h=\frac{1}{n}\log(\alpha)$ and the MME is adapted, showing that indeterminacy is possible in the setting of Theorem \ref{thrm2}.

\begin{rem}\label{threshold}
    Since $f$ has no other singularities, the chain rule implies that we may replace $(f^n)'(x)$ in \eqref{loglim} with $f'(x)$ to achieve the same values $\underline\beta$ and $\overline\beta$.
    Also, if for some $\delta>0$ we have
    \[f(x)= f(p)+(x-p)^{1/\alpha}\text{ for }x \in (p,p+\delta),\]
    then $\underline\beta=\overline\beta = 1-\frac{1}{\alpha}$.
\end{rem}
\begin{rem}\label{rem2}
    If $\underline\beta=\overline\beta = 1- \frac{1}{\alpha}$, there is a similarity between this result and the sparse recurrence condition in \cite{BD}. Their condition is written as $h>s_0 \log(2)$, where $s_0$ bounds how often orbits can be nearly grazing.
    If we identify $s_0$ with $1/n$, since the singularity has period $n$, then statement $(1)$ in Theorem \ref{thrm2}, $h > \frac{1}{n}\log(\alpha)$, becomes $h>s_0 \log(\alpha)$.
    Choosing $\alpha=2$ corresponds to the setting of dispersing billiards (see \cite[Section 2.4]{BD}).
\end{rem}

Last we consider when the singularity is non-periodic.

\three

Theorem \ref{thm:three} is proved in Section \ref{Pthrm3}.
In Example \ref{nonpolynonadapt} we show that there exists an interval map that is not H\"{o}lder continuous near a non-periodic singularity and whose MME is nonadapted.
\begin{rem}
    Recalling the discussion in Remark \ref{rem2}, it is also of interest whether a relationship between $\alpha$ and $h$ can determine the adaptedness of an invariant measure and in particular the MME when the singularity is a non-periodic recurrent point. 
    In the case of Markov interval maps, there is no such possibility as every endpoint of a subinterval is either periodic to itself or preperiodic to an orbit that does not contain itself.
    In the case of non-Markov interval maps, it may be possible to describe some conditions on the rate of recurrence of the singularity that could give a result like Theorem \ref{thrm2} or Theorem \ref{thm:three}.  
\end{rem}

\section{Coding the Markov Partition}\label{coding}
In this section  we recall some standard known results\footnote{Most of these results can be found in \cite{PY}.} leading up to Lemma \ref{Gibbs}, which is a key ingredient of our proofs.
We first construct a coding for a Markov map $f\colon I \to I$ satisfying the conditions described at the beginning of Section $\ref{PreDef}$ with a periodic singularity, $p^+$.
Recall the notation, that $I$ is partitioned into subintervals, $I_i$, with disjoint interiors and for each $i \in S \coloneq \{0,...,m-1\}$ there is a collection of consecutive indices $V(i)$ such that $\tilde{f}_i(I_i) = \bigcup_{j \in V(i)}I_j$. 
To identify transitive components, we define a partial order, $\precsim$, on $S$ by 
\begin{equation}
i \precsim j \text{ if there exists an }n \in \mathbb{N}\text{ such that }f^{-n}(\interior(I_j)) \cap I_i \neq \emptyset.
\end{equation}
Given $i \in S$, let $J(i) = \{ j \in  S :  i \precsim j \text{ and } j \precsim i\}$.
This set could be empty, but taking $j^*$ such that $p = x_{j^*}$, the periodicity of $p^+$ guarantees that there exists an $n \in \mathbb{N}$ such that $f^{-n}(\interior(I_{j^*}))\cap I_{j^*} \neq \emptyset$. 
Let $J \coloneq J(j^*)$ and $I_{J} \coloneq \bigcup_{j \in J} I_j$.
Thus, $f|_{I_{J}}$ is transitive.
Recall $\tilde{B} \coloneq \bigcup_{n \geq 0}f^{-n}(B),$ and let
\begin{equation}\label{Iprime}
   I' \coloneq  I_J \setminus \tilde{B}.  
\end{equation}
Let the elements of $J$ be labeled $\{ j_1, j_2, ..., j_{|J|}\}$.
We will take the symbols in $J$ to be our alphabet and the sequence space will be a closed subset of  $J^{\mathbb{N}_0}$.
Let us put a metric on $J^{\mathbb{N}_0}$ by defining for $\omega, \nu \in J^{\mathbb{N}_0}$
\[d(\omega, \nu) = 2^{-\min(\{n : \omega_n \neq \nu_n\})}.\]
The topology on $J^{\mathbb{N}_0}$ induced by this metric has a basis of cylinders.
A cylinder is defined by $[w] = \{\nu : \nu_i = w_i \}$, where $w$ is a word of finite length formed by concatenating symbols from $J$.
With this topology and metric, $J^{\mathbb{N}_0}$ is a compact metric space.

Let us construct a function on the set of points whose orbits do not intersect the set of endpoints (\ref{Iprime}), $c \colon I' \to J^{\mathbb{N}_0}$, by the following procedure.
For $x \in I'$, $c(x) \in J^{\mathbb{N}_0}$ is the sequence satisfying $c(x)_n = j$ where $f^n(x) \in \interior(I_j) $ for every $n \in \mathbb{N}_0.$
By the definition of $I'$, this function is well defined.

 To describe $c(I')$ we construct a $|J| \times |J|$  $0$-$1$ matrix $A=(a_{ik})$ where, for $j_i,j_k \in J$, we put  $a_{ik} = 1$ if $f(\interior(I_{j_i}))\cap \interior(I_{j_k}) \neq \emptyset$, and $0$ otherwise.
 This adjacency matrix identifies which sequences in $J^{\mathbb{N}_0}$ will be admissible.
 That is, an element $\omega \in J^{\mathbb{N}_0} $ is admissible  if $a_{\omega_i\omega_{i+1}}=1$ for all $i \in \mathbb{N}_0$.
Denote by $\Sigma_A^+ \subset J^{\mathbb{N}_0}$ the sequences admitted by the adjacency matrix $A$. 
Since $f|_{I_J}$ is transitive, for each $i,j \in \{1, ..., |J|\}$, there exists an $n$ such that $(A^n)_{ij} \neq 0$.
A matrix with this property is called irreducible.
\begin{lemma}\label{lemma1}
    The map $c\colon I' \to c(I')$ is a homeomorphism.
\end{lemma}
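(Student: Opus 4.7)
The plan is to verify the three standard ingredients — injectivity, continuity, and continuity of the inverse — using only two inputs: the uniform expansion bound $|f'| \geq \lambda > 1$, together with the fact that points in $I'$ have orbits that avoid the endpoint set $B$. The Markov condition (D) will be used implicitly to ensure cylinders correspond to genuine intervals.

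First I would set notation for the basic cylinder sets. For an admissible word $w = w_0 w_1 \cdots w_{n-1}$ with letters in $J$, let
\[
C_w \;=\; \{\, z \in I' : f^k(z) \in \interior(I_{w_k}) \text{ for all } 0 \leq k \leq n-1\,\}.
\]
By the Markov property (D), $C_w$ is obtained by starting with $\interior(I_{w_0})$ and successively intersecting with preimages of $\interior(I_{w_k})$ under monotone $C^1$ branches; so $C_w$ is the intersection of $I'$ with an open interval $U_w \subset I$. Uniform expansion and the chain rule give $|(f^n)'| \geq \lambda^n$ on $U_w$, and since $f^n(U_w) \subseteq I_{w_{n-1}} \subseteq I$, the bounded-distortion-free argument yields $\diam(U_w) \leq \lambda^{-n}\,\diam(I)$. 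This bound is the workhorse for everything that follows.

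For \emph{injectivity}, suppose $c(x) = c(y) = \omega$ with $x, y \in I'$. Then for every $n$ both points lie in $C_{\omega_0 \cdots \omega_{n-1}}$, so $|x-y| \leq \lambda^{-n}\,\diam(I)$, forcing $x = y$. For \emph{continuity of $c$}, fix $x \in I'$ and $N \in \mathbb{N}$. Because $x \notin \tilde{B}$, we have $f^k(x) \notin B$ for $0 \leq k \leq N-1$, so each iterate $f^k$ is continuous on a neighborhood of $x$ and $f^k(x) \in \interior(I_{c(x)_k})$. Hence there is a neighborhood $W$ of $x$ in $I$ such that every $y \in W \cap I'$ satisfies $c(y)_k = c(x)_k$ for all $k < N$, i.e.\ $d(c(y), c(x)) \leq 2^{-N}$. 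For \emph{continuity of $c^{-1}$}, if $d(c(x_m), c(x)) \leq 2^{-n}$ then $x_m$ and $x$ both lie in $C_{c(x)_0 \cdots c(x)_{n-1}} \subseteq U_{c(x)_0 \cdots c(x)_{n-1}}$, so $|x_m - x| \leq \lambda^{-n}\,\diam(I)$, which tends to $0$ as $n \to \infty$.

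The only step that requires real care is the diameter estimate $\diam(U_w) \leq \lambda^{-n}\,\diam(I)$ — this is where I would be most careful, since it needs one to follow monotone branches across iterates and use the Markov property to guarantee that $U_w$ is a genuine interval on which $f^n$ extends to a $C^1$ bijection onto a subinterval of $I$ (so that the mean value theorem applies). Once this estimate is in hand, the three required properties fall out as above, and together with the fact that $c(I')$ inherits the subspace topology from $\Sigma_A^+$, we conclude that $c : I' \to c(I')$ is a homeomorphism.
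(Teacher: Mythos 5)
Your proposal is correct and follows essentially the same route as the paper: injectivity and continuity of $c^{-1}$ from the uniform expansion estimate on cylinder intervals, and continuity of $c$ from the fact that orbits of points in $I'$ avoid $B$, so nearby points share the same code for many iterates; your version just makes the interval--cylinder correspondence and the diameter bound more explicit than the paper does. (Only a cosmetic slip: it is $f^{n-1}(U_w)\subseteq I_{w_{n-1}}$ rather than $f^{n}(U_w)\subseteq I_{w_{n-1}}$, but since $f^{n}(U_w)\subseteq I$ the estimate $\diam(U_w)\leq \lambda^{-n}\diam(I)$ stands.)
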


\begin{proof}
By definition, $c$ is surjective.
Suppose $x, y \in I'$ and $c(x) = c(y) = \omega$. Then $|f^n(x) - f^n(y)| \leq \diam{I_{\omega_n}} \leq 1$ for all $n \in \mathbb{N}$. 
Since $f$ is uniformly expanding, this implies that there is a $\lambda > 1 $ such that for all $n \in \mathbb{N}$, $|x - y| \leq  \lambda^{-n}$. 
Thus, $x=y$, so $c$ is injective.

We next show that $c$ is continuous. 
Note that the sets $[w] \cap c(I')$, where $w$ is an admissible word, form a basis in the subspace topology.
By construction, $\bigcap_{i=0}^n f^{-i}(\interior(I_{w_i})) \neq \emptyset$. 
Take an open interval with endpoints in $I'$, $(a,b) \subset \interior(I_j)$ for some $j \in J$. 
Let $r \in \mathbb{N}_0$ be the least element such that $f^r(a)$ and $f^r(b)$ are in different intervals. Then, $c((a,b)\cap I') = [w]\cap c(I')$ for a cylinder of length $r$.
This association also shows that $c^{-1}$ is continuous.
\end{proof}

\begin{lemma}
The image of $I'$ under $c$ is dense.  That is, $\overline{c(I')} =  \Sigma_A^+$.
\end{lemma}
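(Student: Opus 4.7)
The plan is to show that $c(I')\subseteq \Sigma_A^+$ and that every cylinder set in $\Sigma_A^+$ meets $c(I')$. Since cylinders form a basis for the topology on $\Sigma_A^+$, density will follow.

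First I would verify that $c(I')\subseteq\Sigma_A^+$. If $x\in I'$ and $c(x)=\omega$, then $f^n(x)\in\interior(I_{\omega_n})$ and $f^{n+1}(x)\in\interior(I_{\omega_{n+1}})$ for every $n\in\mathbb{N}_0$. Hence $f(\interior(I_{\omega_n}))\cap\interior(I_{\omega_{n+1}})\neq\emptyset$, which by definition of $A$ gives $a_{\omega_n\omega_{n+1}}=1$, so $\omega\in\Sigma_A^+$.

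For density, fix an admissible word $w=w_0w_1\cdots w_n$ and consider the cylinder $[w]\subseteq\Sigma_A^+$. I would construct by downward induction on $k=n,n-1,\dots,0$ a nested sequence of nonempty open subintervals $U_k\subseteq\interior(I_{w_k})$ with the property that $f(U_k)=U_{k+1}$ for $k<n$, starting from $U_n=\interior(I_{w_n})$. The Markov property (D) gives $\tilde f_{w_k}(I_{w_k})=\bigcup_{j\in V(w_k)}I_j$, and admissibility of $w$ forces $w_{k+1}\in V(w_k)$, so $I_{w_{k+1}}\subseteq \tilde f_{w_k}(I_{w_k})$. Because $\tilde f_{w_k}$ is continuous and monotonic, the preimage $\tilde f_{w_k}^{-1}(U_{k+1})$ is a nonempty open subinterval of $\interior(I_{w_k})$, which I take as $U_k$. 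By construction, any $x\in U_0$ satisfies $f^k(x)\in\interior(I_{w_k})$ for $k=0,\dots,n$.

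Finally I would invoke that $\tilde B=\bigcup_{n\ge0}f^{-n}(B)$ is a countable set, whereas $U_0$ is a nonempty open interval and hence uncountable, so there exists $x\in U_0\setminus \tilde B\subseteq I'$. Its itinerary $c(x)$ lies in $[w]\cap c(I')$, proving this intersection is nonempty. Since $w$ was an arbitrary admissible word, $c(I')$ meets every basic open set of $\Sigma_A^+$, so $\overline{c(I')}=\Sigma_A^+$. I do not expect any serious obstacle here; the only subtlety is making sure the nested-interval construction actually uses the Markov condition (not merely admissibility of adjacent pairs) so that each $U_k$ is a genuinely nonempty open subinterval, and being careful enough to land in $I'$ rather than just in $I$.
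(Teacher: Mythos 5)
Your proof is correct and follows essentially the same route as the paper: the paper fixes $\omega\in\Sigma_A^+$ and picks, for each $k$, a point of $I'$ in $\bigcap_{i=0}^k f^{-i}(\interior(I_{w_i}))$ whose code lies in $[w^k]$, which is exactly your ``every cylinder meets $c(I')$'' statement. You simply supply details the paper leaves implicit, namely the Markov-property pullback showing the intersection is a nonempty open interval and the countability of $\tilde B$ guaranteeing a point of $I'$ inside it.
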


\begin{proof}
    By construction, $c(I') \subset \Sigma_A^+$, so $\overline{c(I')} \subset  \Sigma_A^+$.

    Let $\omega \in \Sigma_A^+ \setminus c(I')$. Consider the word $w^k = (w_0, ..., w_k)$. 
    By the definition of $A$, \[D_k=\bigcap_{i=0}^k f^{-i}(\interior(I_{w_i})) \neq \emptyset.\]
    Choose any $x \in I' \cap D_k$. Thus, for all $k \in \mathbb{N}$ we have $x_k \in I'$ and $c(x_k) \in [w^k]$. 
    Hence, $d(\omega,c(x_k)) \leq |J|^{-1-k}. $
    Therefore, $\lim_{k \to \infty}c(x_k) = \omega$, so $\Sigma_A^+ \subset \overline{c(I')}.$  
\end{proof}

Define $\sigma_A \colon \Sigma_A^+ \to \Sigma_A^+$ to be the left shift operation. That is, $\sigma_A(\omega)_i = \omega_{i+1}$.

We define a semiconjugacy $\pi$ from $(\Sigma_A^+, \sigma_A)$ to $(I_J, f)$ as follows.
For each $i \in J$ we require $\tilde{f}_i \circ \pi = \pi \circ \sigma_{A}|_{[i]}$, where $\sigma_{A}|_{[i]}$ means we restrict to sequences that start with $i$.
If $\omega \in c(I')$, then defining $\pi(\omega) = c^{-1}(\omega)$ is sufficient by Lemma \ref{lemma1}.
To extend our definition of $\pi$ to $\Sigma_A^+$, we define, for $\omega \in \Sigma_A^+$ and $n \in \mathbb{N}_0$, $f^n_{\omega} = \tilde{f}_{\omega_{n-1}} \circ \tilde{f}_{\omega_{n-2}} \circ ... \circ \tilde{f}_{\omega_0}$.
Now, define $\pi(\omega) = \bigcap_{n=0}^{\infty}(f^n_{\omega})^{-1}I_{\omega_n}.$
To show $\pi$ is well defined on $\Sigma_A^+$ we must show $(1)$ this intersection is nonempty and $(2)$ this intersection contains only one element.

$(1)$ Since $\tilde{f}_i$ is continuous, $\tilde{f}_i^{-1}I_j$ is closed for all $i,j \in J$.
Also, by the definition of $A$, $\bigcap_{n=0}^{k}(f^n_{\omega})^{-1}I_{\omega_n} \neq \emptyset$ for all $k \in \mathbb{N}_0$. 
Thus, by the finite intersection property, $\bigcap_{n=0}^{\infty}(f^n_{\omega})^{-1}I_{\omega_n} \neq \emptyset.$

$(2)$ Suppose $x \neq y$ and $x,y \in \bigcap_{n=0}^{\infty}(f^n_{\omega})^{-1}I_{\omega_n}$. Without loss of generality suppose $x<y$. Then, for all $n \in \mathbb{N}_0$, $f^n([x,y]) \in I_{\omega_n}$, which contradicts the condition that $f$ is uniformly expanding.

The map, $\pi$, is continuous on $c(I')$ by Lemma \ref{lemma1}. By the argument for $(2)$ we also have that $\pi$ is continuous on $\Sigma_A^+$ because if $\omega_k \to \omega$ in $\Sigma_A^+$, for any $\epsilon >0$, there exits $n \in \mathbb{N}_0$ such that $\diam(f_{\omega}^{-n}(I_i)) < \epsilon$ for each $i \in J$.
Thus, for $k > n$, $|\pi(\omega) - \pi(\omega_k)| < \epsilon$, so $\lim_{k \to \infty} \pi(\omega_k) = \pi(\omega)$.

Our dynamical system $(\Sigma_A^+, \sigma_A)$ is a subshift of finite type (SFT).
Thus, there is a unique MME given by the Parry measure (see \cite[Section 4.4.c]{KH}).\footnote{In this reference, they assume $A$ is primitive (irreducible and aperiodic) because they use a weaker form of the Perron--Frobenius Theorem, but the Parry measure is the same.}
The Parry measure is a Markov measure defined as follows.
By the Perron--Frobenius Theorem for irreducible non-negative matrices, of which $A$ is a member because $(I_J,f)$ is transitive, we have a Perron--Frobenius simple eigenvalue, $\lambda >0$, and corresponding left and right eigenvectors $u,v$ with positive entries normalized such that $\langle u,v\rangle = 1$.
Define the probability vector $p = (u_1v_1, ...,u_mv_m)$ and the stochastic matrix $P_{ij} = \frac{A_{ij}v_j}{\lambda v_i}$.
Then the Parry measure is the $(P,p)$-Markov measure, $\mu$, given by 
\[\mu([j_{i_1}...j_{i_n}]) = p_{i_1}P_{i_1i_2}...P_{i_{n-1}i_n} = u_{i_1}v_{i_1}\frac{v_{i_2}}{\lambda v_{i_1}}...\frac{v_{i_m}}{\lambda v_{i_{m-1}}} = u_{i_1}v_{i_m}\lambda^{-m+1}\]
for any cylinder $[w]$ defined by the $A$-admissible word $w = (j_{i_1}...j_{i_n})$.
It is shown separately in \cite{KH} that $\log(\lambda)$ is both the topological entropy of $(\Sigma^+_A,\sigma)$ and the measure theoretic entropy of $\mu$.
Thus, the MME, $\mu$, satisfies Gibbs bounds.
That is, there exist constants $c_1,c_2 >0$ such that for any $A$-admissible word, $w$, of length $n$ we have 
  \begin{equation}\label{GibbsBound}
   c_1 e^{-nh} \leq \mu([w]) \leq c_2e^{-nh}.
  \end{equation}

We can define a measure $\mu_f$ on $I_J$ by requiring $\mu_f(U) = \mu(\pi^{-1}(U))$ for any open set $U \subset I_J$.
Thus, for a $\mu_f$-measurable function $b$ on $I_J$,
\begin{equation}
    \int_{I_J} b(x)\d \mu_f(x) = \int_{\Sigma_A^+} b \circ \pi (\omega) \d \mu(\omega).
\end{equation}
This measure, $\mu_f$, is the MME of $(I_J, f)$.
The preceding description directly implies the following lemma.
\begin{lemma}\label{Gibbs}
   Let $A \subset \interior(I_j)$ for some $j \in J$ and $\{w^n\}$ be a sequence of words indexed by $n$ such that $w^n$ has length $n$. If there exist constants $L_n, R_n \in \mathbb{R_+}$ depending on $A$ and $\{w^n\}$ such that 
   \begin{enumerate}
       \item $\pi^{-1}(A) \subset \bigcup_{n=1}^{\infty}[w^n]$ 
       \item for all $x \in A$ such that $\pi^{-1}(x) \in [w^n]$, $L_n \leq b(x) \leq R_n$,
   \end{enumerate}
   then for the constant $c_2>0$ from \eqref{GibbsBound}
\[
 \int_A b(x)\d \mu_f(x) \leq \sum_{n=1}^{\infty}R_n c_2e^{-nh_f}. 
\]
If we also have that the interiors of the cylinders $[\omega^n]$ are disjoint then for the constant $c_1\geq 0$ from \eqref{GibbsBound}
\[
 \sum_{n=1}^{\infty}L_n c_1e^{-nh_f} \leq \int_A b(x)\d \mu_f(x). 
\]
\end{lemma}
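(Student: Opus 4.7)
The plan is to transport the integral from $(I_J,\mu_f)$ to $(\Sigma_A^+,\mu)$ via the semiconjugacy $\pi$, and then apply the Gibbs bounds \eqref{GibbsBound} to each cylinder in the cover provided by hypothesis (1). No further machinery is needed; the proof is essentially a direct bookkeeping of the two hypotheses against \eqref{GibbsBound}.

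First I would use the definition $\mu_f = \pi_*\mu$ to rewrite
\[
\int_A b(x)\d\mu_f(x) = \int_{\pi^{-1}(A)} b\circ\pi(\omega)\d\mu(\omega).
\]
For the upper bound, hypothesis (1) gives $\pi^{-1}(A)\subset \bigcup_n[w^n]$, so subadditivity and the pointwise estimate $b\circ\pi\leq R_n$ on $[w^n]\cap\pi^{-1}(A)$ from hypothesis (2) yield
\[
\int_{\pi^{-1}(A)} b\circ\pi\d\mu \leq \sum_{n=1}^\infty \int_{[w^n]\cap\pi^{-1}(A)} b\circ\pi\d\mu \leq \sum_{n=1}^\infty R_n\,\mu([w^n]).
\]
The Gibbs upper estimate $\mu([w^n])\leq c_2e^{-nh_f}$ finishes this direction.

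For the lower bound, I would use the disjoint-interiors hypothesis together with the fact that the boundaries of cylinders in $\Sigma_A^+$ are $\mu$-null (the Parry measure is nonatomic and assigns zero measure to the countable set of sequences lying in boundaries of finite cylinders). This converts the cover into a disjoint decomposition up to $\mu$-null sets, so
\[
\int_{\pi^{-1}(A)} b\circ\pi\d\mu = \sum_{n=1}^\infty \int_{[w^n]\cap\pi^{-1}(A)} b\circ\pi\d\mu \geq \sum_{n=1}^\infty L_n\,\mu([w^n]\cap\pi^{-1}(A)).
\]
Then I would observe that $\Sigma_A^+\setminus\pi^{-1}(A)$ is contained in the countable set of preimages of $\tilde B\cap I_J$ together with $\pi^{-1}(I_J\setminus A)$; on each cylinder $[w^n]$ that is used in the cover, the portion missing from $\pi^{-1}(A)$ is $\mu$-null, so $\mu([w^n]\cap\pi^{-1}(A))=\mu([w^n])$ and the Gibbs lower bound $\mu([w^n])\geq c_1e^{-nh_f}$ closes the estimate.

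The only step that demands any care is this last $\mu$-null set accounting in the lower bound: one must make sure that, modulo the symbolic sequences corresponding to orbits hitting $\tilde B$, the cover of $\pi^{-1}(A)$ by $\{[w^n]\}$ exhausts full $\mu$-measure of each participating cylinder. This is where the earlier remark that $c$ is defined on $I'=I_J\setminus\tilde B$ and the Parry measure gives no weight to the countable set $\pi^{-1}(\tilde B)$ is invoked. Once that is in place, the lemma follows immediately by summation.
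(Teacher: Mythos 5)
Your upper-bound half is exactly the direct argument the paper intends (the lemma is stated there without a written proof, as an immediate consequence of the Gibbs bounds and the definition of $\mu_f$ via $\pi$): push the integral to $\Sigma_A^+$, use the cover from hypothesis (1), the bound $b\circ\pi\leq R_n$ on $[w^n]\cap\pi^{-1}(A)$, and $\mu([w^n])\leq c_2e^{-nh}$. That part is fine.

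The lower-bound half has a genuine gap at the step where you claim that for each cylinder used in the cover, $\mu([w^n]\cap\pi^{-1}(A))=\mu([w^n])$. The hypotheses only give the inclusion $\pi^{-1}(A)\subset\bigcup_n[w^n]$; they say nothing that forces the reverse containment $[w^n]\subset\pi^{-1}(A)$ modulo $\mu$-null sets. The set you must discard, $[w^n]\cap\pi^{-1}(I_J\setminus A)$, is in general of positive $\mu$-measure and is not contained in the countable set $\pi^{-1}(\tilde{B})$; the null-set remark about orbits hitting $\tilde{B}$ only resolves the coding ambiguity at subinterval endpoints, not the possibility that $\pi([w^n])$ sticks out of $A$. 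Indeed, under only the stated hypotheses the claimed inequality can fail: for the doubling-map coding take $A$ a small subinterval covered by a single short cylinder, and pad the list with cylinders of the remaining lengths disjoint from it; on those cylinders condition (2) is vacuous, so the $L_n$ may be taken arbitrarily large and the series $\sum_n L_nc_1e^{-nh}$ diverges while $\int_A b\,d\mu_f$ stays finite. What the lower bound actually needs, and what holds in the paper's applications (there the inclusion runs the other way, e.g. $\bigcup_{k,i,j}[0^{k+M+i}j]\subset\pi^{-1}((0,\delta_1))$ in the proof of statement (2) of Theorem \ref{thrm2}), is that the pairwise disjoint cylinders are contained in $\pi^{-1}(A)$ up to $\mu$-null sets, so that
\[
\int_A b\,d\mu_f\;\geq\;\sum_{n}\int_{[w^n]}b\circ\pi\,d\mu\;\geq\;\sum_n L_n\,\mu([w^n])\;\geq\;\sum_n L_nc_1e^{-nh}.
\]
So either add that containment as a hypothesis (consistent with how the lemma is used) or verify it in each application; as written, your null-set accounting does not supply it.
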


\section{Proofs of Theorems}\label{proofs}

\subsection{Proof of Theorem \ref{thm:one}}\label{Pthrm1}

    As mentioned above, Theorem \ref{thm:one} is a special case of Theorem \ref{thrm2} if $h_\mathrm{top}(f) \neq \log(\alpha)$.
    Thus, we need only show the case $h_\mathrm{top}(f) = \log(\alpha)$.

\begin{proposition}\label{propeq}

    Let $I = [0,1]$ and let $f\colon I \to I$ be a piecewise $C^1$ uniformly expanding transitive Markov map. Suppose there exists $\delta >0$ and $\alpha >1$ such that $f(x)=x^{1/\alpha}$ on $[0,\delta]$ and $f$ has no other singularities. 
    If  $h_\mathrm{top}(f) = \log(\alpha)$, then the MME, $\mu$, for $(I,f)$ is nonadapted.
    
\end{proposition}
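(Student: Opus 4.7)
The plan is to show $\int_{I_0^+} |\log x|\, d\mu(x) = +\infty$, which by Definition \ref{adaptedmeasure} gives $p$-nonadaptedness. The strategy exploits the exact cancellation at the critical value $h_\mathrm{top}(f) = \log \alpha$: the Gibbs lower bound on the cylinder $[(j^*)^n]$ of points trapped near the singularity for $n$ iterations decays as $c_1 \alpha^{-n}$, while $|\log x|$ grows like $\alpha^{n-1}$ on the doubly-exponentially shrinking neighborhood onto which that cylinder projects. Together these produce a borderline-nonintegrable $1/t$-tail for the $\mu$-distribution of $|\log x|$.

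First, let $j^* \in J$ be the index with $x_{j^*} = 0$, and set $\delta' = \min(\delta,\, x_{j^*+1})$. Since $f(y) = y^{1/\alpha}$ on $[0, \delta']$, iterating yields, for any $x \in [0, (\delta')^{\alpha^{n-1}}]$,
\[
f^k(x) = x^{1/\alpha^k} \in [0, \delta'] \subseteq I_{j^*} \quad\text{for } k = 0, 1, \ldots, n-1.
\]
When $\delta' = x_{j^*+1}$, the coding $c$ of Section \ref{coding} identifies such $x$ with the cylinder $[(j^*)^n]$, and the Gibbs lower bound \eqref{GibbsBound} at $h = \log \alpha$ gives a constant $C_0 > 0$ with
\[
\mu\bigl([0, (\delta')^{\alpha^{n-1}}]\bigr) \ \geq \ C_0\, \alpha^{-n}
\]
for all $n \geq 1$. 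When $\delta' = \delta < x_{j^*+1}$, the same bound holds after appending a fixed-length symbolic word forcing $f^{n-1}(x) \in [0,\delta]$; this alters only the value of $C_0$.

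Second, I apply the layer-cake identity
\[
\int_{(0,1]} |\log x|\, d\mu(x) = \int_0^{\infty} \mu\bigl((0, e^{-t}]\bigr)\, dt.
\]
Given $t \geq |\log \delta'|$, choose the least $n(t) \geq 1$ with $(\delta')^{\alpha^{n(t)-1}} \leq e^{-t}$; this satisfies $n(t) \leq 2 + \log_\alpha\bigl(t/|\log \delta'|\bigr)$. Combined with the previous step, this yields
\[
\mu\bigl((0, e^{-t}]\bigr) \ \geq \ C_0\, \alpha^{-n(t)} \ \geq \ \frac{C_0\, |\log \delta'|}{\alpha^{2}\, t},
\]
so integrating a $1/t$-bound from $|\log \delta'|$ to $\infty$ diverges, as required.

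The main obstacle I expect is the bookkeeping in identifying the symbolic cylinder $[(j^*)^n]$ with the one-sided neighborhood $[0, (\delta')^{\alpha^{n-1}}]$ via the semiconjugacy $\pi$: one must account for the null set $\tilde B$ on which $\pi$ fails to be a bijection, and introduce the auxiliary symbolic suffix when $I_{j^*}$ extends beyond $[0,\delta]$. Once the measure comparison $\mu([0, (\delta')^{\alpha^{n-1}}]) \gtrsim \alpha^{-n}$ is established, the remainder is elementary: matching the exponential growth rate $\alpha^{n-1}$ of $|\log x|$ on nested neighborhoods of $0$ with the exponential decay rate $\alpha^{-n}$ of cylinder mass at the critical entropy.
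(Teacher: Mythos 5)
Your proposal is correct and uses essentially the same mechanism as the paper: a Gibbs lower bound of order $\alpha^{-n}$ on the cylinders of points that stay in the subinterval containing $0$ for $n$ steps, played against the doubly-exponential shrinking of those sets so that $b(x)=|\log x|\gtrsim \alpha^{n}$, with your layer-cake tail estimate being only a cosmetic repackaging of the paper's direct summation via Lemma \ref{Gibbs} (the paper uses the exit-symbol cylinders $[0^{n}j]$ to get disjointness, you use nested intervals instead). The two points you flag are indeed only bookkeeping: in the case $\delta<x_{j^*+1}$ one can take the cylinder $[(j^*)^{n+L}]$ for a fixed $L$ with $\pi([(j^*)^{L+1}])\subset[0,\delta]$, and since the branch $\tilde f_{j^*}$ is monotone and maps $(\delta,x_{j^*+1}]$ above $\delta$, all of the first $n$ iterates are then forced into $[0,\delta]$, so the bound $\mu([0,(\delta')^{\alpha^{n-1}}])\geq C_0\alpha^{-n}$ survives with a smaller constant.
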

\begin{proof}
The map $f$ has a fixed singularity at $0$, is semiconjugate to an SFT on $m$ elements for some $m \in \mathbb{N}$, and the MME, $\mu$, is a Parry measure with entropy $h.$
Let the subintervals for $(I,f)$ be labeled by $\{0, ...,m-1\}$.
By transitivity, there exists a $j \in \{1,...,m-1\}$ such that the set $B_n \coloneq \pi([0^nj])\cap(0,\delta)$ is not empty for any $n \in \mathbb{N}$.
If $x \in B_n$, then $f^n(x) < \delta \leq f^{n+1}(x)$. 
Since for $x$ in this region, $f^n(x) = x^{\alpha^{-n}}$, we have
$x^{\alpha^{-n}}<\delta$.
Thus, $ \log(x)\alpha^{-n} < \log(\delta)$, so $b(x) = |\log(x)| > |\log(\delta)|\alpha^n.$
Therefore, since $h = \log(\alpha)$, by Lemma \ref{Gibbs}, we have 
\[\int b(x) \d \mu(x)  \geq  \sum_{n=1}^{\infty}\alpha^{-n-1}\alpha^n|\log(\delta)| = \infty .\]
Thus, the MME for $(I,f)$ is nonadapted.
\end{proof}

\subsection{Proof of Theorem \ref{thrm2}}\label{Pthrm2}
Let $I = [0,1]$ and let $f \colon I \to I$ be a piecewise $C^1$ uniformly expanding Markov map with a periodic singularity, $p^+$, of period $n$. 
Recall \eqref{loglim}, 
   \begin{equation*}
       L(x) = \frac{\log((f^n)'(x))}{-\log(x-p)},\quad \quad   \overline{\beta} = \limsup_{x \to p^+}L(x),  \quad \quad \underline{\beta} = \liminf_{x \to p^+}L(x).
   \end{equation*}
   We also assume that $f$ has no other singularities.
   Then, for $I_J$ the transitive component of $f$ containing the singularity and $h$ the topological entropy of $(I_J,f)$, we will show the following.
    \begin{enumerate}
        \item If $\overline \beta <1$ and $h > -\frac{1}{n}\log(1-\overline{\beta})$ the MME for $(I_J,f)$ is adapted.
       \item If $\underline \beta <1$ and $h < -\frac{1}{n}\log(1-\underline{\beta})$ the MME for $(I_J,f)$ is nonadapted.
       \item When $\underline\beta = 1$ the MME for $(I_J,f)$ is nonadapted.
   \end{enumerate}
To simplify the proof, let us define $g \colon [-p,1-p] \to [-p,1-p]$ by  $g(x) = f^n(x+p)-p $.
Let $B_{f^n}$ be the set of endpoints of subintervals for $f^n$ and $B_g$ be the set of endpoints of subintervals of $g$.
Let $J'$ be the transitive component of $(I,f^n)$ that contains the interval with $p$ as a left endpoint and $K$ be the transitive component of $(I-p, g)$ that includes the interval, $I_0$, with $0$ as a left endpoint.
We collect some facts:
\begin{enumerate}
    \item[(A)] $g(0) = 0$,
    \item[(B)] $g$ is uniformly expanding,
    \item[(C)] $g'(x) = (f^n(x+p))'$ for $x \in (0, \delta)$,
    \item[(D)] $ \underline{\beta} \in [0,1]$ and $\overline\beta \geq 0$,
    \item[(E)] $B_g = B_{f^n} - p$,
    \item[(F)] the coding for $(I_K, g)$ is isomorphic by relabeling to the coding for $(I_{J'}, f^n)$,
    \item[(G)] the entropy of $(I_K,g)$ is $nh$.
\end{enumerate}

\begin{lemma}\label{adapted}
    The MME for $(I_J,f)$ is adapted if and only if the MME for $(I_K, g)$ is adapted.
\end{lemma}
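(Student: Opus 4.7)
The plan is to pass through the intermediate system $(I_{J'}, f^n)$, where $I_{J'}$ is the transitive component of $(I, f^n)$ containing the subinterval with $p$ as its left endpoint. The translation $T\colon x \mapsto x - p$ conjugates $f^n|_{I_{J'}}$ to $g|_{I_K}$ via the identity $g(T(x)) = f^n(x) - p = T(f^n(x))$, and facts (A)--(G) ensure $T(I_{J'}) = I_K$. Since conjugation preserves MMEs, the MME $\mu_g$ for $(I_K, g)$ equals $T_* \mu_{J'}$, where $\mu_{J'}$ denotes the MME for $(I_{J'}, f^n)$. Under $T$ the set $I_0^+$ pulls back to $I_p^+$ and $|\log(y)|$ to $|\log(x-p)|$, so $p$-adaptedness of $\mu_{J'}$ is exactly $0$-adaptedness of $\mu_g$. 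It therefore suffices to show the MME $\mu_J$ for $(I_J, f)$ is $p$-adapted if and only if $\mu_{J'}$ is $p$-adapted.

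To relate $\mu_J$ and $\mu_{J'}$, I would analyze the ergodic decomposition of $\mu_J$ under $f^n$. Because $\mu_J$ is $f$-ergodic of entropy $h$, a standard fact from ergodic theory gives $\mu_J = \frac{1}{k}\sum_{i=0}^{k-1} \nu_i$, where $\nu_i = (f^i)_* \nu_0$, the integer $k$ divides $n$, the $\nu_i$ are $f^n$-ergodic of entropy $nh$, and their supports $C_0, \ldots, C_{k-1}$ form a cyclic $f$-orbit of the transitive components of $(I_J, f^n)$ with $C_0 = I_{J'}$. Since $h_{\mathrm{top}}(f^n|_{C_0}) = n h_{\mathrm{top}}(f|_{I_J}) = nh$ and each $(C_i, f^n)$ admits a unique MME by the coding in Section \ref{coding}, the equality $h_{\nu_0}(f^n) = nh$ forces $\nu_0 = \mu_{J'}$; equivalently, $\mu_{J'} = k\, \mu_J|_{I_{J'}}$.

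With this identification in hand, I would split the adaptedness integral as
\[
\int_{I_p^+} |\log(x-p)|\,\d{\mu_J}(x) \;=\; \frac{1}{k}\sum_{i=0}^{k-1} \int_{I_p^+} |\log(x-p)|\,\d{\nu_i}(x).
\]
The key observation is that the logarithmic singularity is localized at $p$, and only $C_0$ meets $I_p^+$ near $p$: each $C_i$ is a union of subintervals of the Markov partition, and the subinterval with $p$ as its left endpoint lies in $C_0$, so there exists $\epsilon > 0$ with $C_i \cap (p, p+\epsilon) = \emptyset$ for every $i \neq 0$. Hence the terms with $i \neq 0$ are automatically finite, and finiteness of the full integral is equivalent to finiteness of the $i=0$ term, which is $\frac{1}{k}\int_{I_p^+} |\log(x-p)|\,\d{\mu_{J'}}(x)$. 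The atom condition $\mu_J(\{p\}) = 0 \Leftrightarrow \mu_{J'}(\{p\}) = 0$ follows from the same localization (and is in any case automatic, as Parry measures on irreducible subshifts of finite type are atomless).

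The main obstacle I anticipate is the clean extraction of $\nu_0$ as the MME for $(I_{J'}, f^n)$: this requires knowing that $f$ genuinely cycles the $C_i$ with equal weights $1/k$, the relation $h_{\mathrm{top}}(f^n|_{C_i}) = nh$ (so that $\nu_0$ saturates the variational principle on $C_0$), and uniqueness of the MME on each transitive component. Once this structural step is in place, the remainder of the argument reduces to the elementary localization of the logarithmic singularity.
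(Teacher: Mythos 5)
Your argument is correct, and it rests on the same underlying idea as the paper's proof — reduce via the translation conjugacy (fact (F)) to comparing $(I_J,f)$ with $(I_{J'},f^n)$, and then exploit the cyclic decomposition under the $n$-th power — but the execution differs. The paper works symbolically: it lifts to two-sided SFTs, invokes the cyclic structure of transitive SFTs from Lind--Marcus to split $\Sigma_A$ into pieces permuted by $\sigma$, and writes an explicit correspondence $\phi$ between $\sigma^n|_{\Sigma_{A'}}$-invariant measures and $\sigma$-invariant measures, from which the transfer of adaptedness is asserted. You instead stay on the interval, take the $f^n$-ergodic decomposition $\mu_J=\frac1k\sum_i\nu_i$ with $k\mid n$, and identify $\nu_0$ with the MME of the component by an entropy/variational-principle argument ($h_{\nu_0}(f^n)=nh=h_{\mathrm{top}}(f^n|_{C_0})$ plus uniqueness of the MME on each transitive component); this replaces the paper's explicit formula for $\phi$, and the needed ingredients (equal weights $1/k$, equality of the component entropies, uniqueness on components) all follow from the Section \ref{coding} coding, exactly as you flag. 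Your version also buys something the paper leaves implicit: the explicit localization step showing that only the component $C_0$ containing the interval $[p,x_{j^*+1}]$ meets $(p,p+\epsilon)$, so the $i\neq 0$ terms of the adaptedness integral are automatically finite and adaptedness of $\mu_J$ is equivalent to adaptedness of $\mu_{J'}$; together with the observation that the MMEs are atomless, this makes the ``adapted iff adapted'' conclusion fully justified rather than asserted. One cosmetic caveat: the transitive components of $(I,f^n)$ are defined with respect to the finer $f^n$-partition, so you should note (or check, via the cyclic classes of the adjacency matrix) that as subsets of $I$ they coincide with unions of $f$-subintervals and hence with the supports $C_i$; this is routine and does not affect the argument.
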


\begin{proof}
    There is an entropy-preserving correspondence between invariant Borel probability measures on $(I_J, f)$ and $(I_K, g)$ that give zero measure to $\bigcup_{i\geq 0} f^{-i}(B)$ and $\bigcup_{i\geq0} g^{-i}(B_g)$.
    By (F), it is sufficient to consider $(I_{J'}, f^n)$ and $(I_J,f)$.
    We will construct the correspondence by lifting to two-sided SFTs.
    First, we can code $(I_J,f)$ and $(I_J,f^n)$ with SFTs $(\Sigma_A, \sigma)$ and $(\Sigma_{A^n},\sigma)$.
    There exists a submatrix $A'$, of $A$,  such that $(\Sigma_{A'}, \sigma)$ is a coding for $(I_{J'},f^n).$
    There is a natural measure theoretic isomorphism between the $f$-invariant Borel probability measures on the one-sided and two-sided SFT's $(\Sigma_A, \sigma)$.
    Note that because $(I_J,f)$ is transitive, $(\Sigma_A,\sigma)$ is transitive, so $A$ is an irreducible matrix.

    We can use the cyclic structure of transitive SFTs \cite[Section 4.5]{LinMar} to decompose $\Sigma_A$ into disjoint sets $\Sigma_{A_i}$, $1 \leq i \leq n$, labeled such that $A'=A_1$.
    In fact, since $f^n(p)=p$, $\sigma$ cyclically permutes the sets $\Sigma_{A_i}$ and $(\Sigma_{A'},\sigma^n)$ is mixing.
    This gives a correspondence $\phi$ between $\sigma^n|_{\Sigma_{A'}}$-invariant measures $\nu$ and $\sigma|_{\Sigma_A}$-invariant measures $\mu$.
    For a word $w$, admitted by $A'$, $\phi^{-1}(\mu)([w]) = n\mu([w])$.
    For a word $w$, admitted by $A$, 
    \[\phi(\nu)([w]) = \frac{1}{n} \sum_{i=0}^{n-1} \nu\left(\sigma^i([w])\cap \Sigma_{A'}\right).\]
    Thus, given an $\sigma^n|_{\Sigma_{A'}}$-invariant measure $\nu$, the induced measure on $(I_J, f)$ will be adapted if and only if the measure induced by $\phi(\nu)$ on $(I_{J'},f^n)$ is adapted.
\end{proof}

Now we prove Theorem \ref{thrm2} for $(I_K, g)$.
By the construction in Section \ref{coding}, we have a coding for $(I_K, g)$ given by $(\Sigma_A, \sigma)$ and a semiconjugacy $\pi \colon \Sigma_A \to I_K$.
Let $A = (a_{ij})$.
Let the symbols be $\{0,...,n-1\}$ and $I_0$, as above, be the interval in $I_K$ coded by $0$ and with $0$ as a left endpoint.
Then, by (D), we have that for all $ \epsilon > 0 $ there exists a $\delta_1>0$ such that $\delta_1 \in I_0$ and if $x \in (0, \delta_1),$ 
   \[\underline{\beta}-\epsilon < \frac{\log(g'(x))}{-\log(x)} < \overline{\beta}+\epsilon.\]
   This implies
  \[-(\underline{\beta}-\epsilon)\log(x) < \log(g'(x)) < -(\overline{\beta} + \epsilon)\log(x),\]
so
\begin{equation}\label{bound}
   x^{-(\underline{\beta}-\epsilon)} < g'(x) < x^{-(\overline{\beta}+\epsilon)}. 
\end{equation}
Let
\begin{equation}\label{M}
    M \coloneq \min\{k \in \mathbb{N} : g^k(\delta_1) \notin I_0\}.
\end{equation}
Recall that $\underline\beta \leq 1$.

    \subsubsection{Proof of statement (1) of Theorem \ref{thrm2}}
 Suppose $\overline\beta < 1$ and $h > -\frac{1}{n}\log(1-\overline{\beta})$. Then, $1-\overline\beta > e^{-h_g}$. Let $\epsilon>0$ be chosen such that $1-(\overline\beta+\epsilon) > e^{-h_g}$.
   By the Mean Value Theorem and the bounding inequality (\ref{bound}), for every $x \in (0, \delta_1)$ there exits a $ c \in (\frac{x}{2},x)$ such that
   \[ \frac{g(x) -g(\frac{x}{2})}{\frac{x}{2}} = g'(c) < c^{-(\overline\beta+\epsilon)} < \left(\frac{x}{2}\right)^{-(\overline\beta+\epsilon)}.\]
   Let $s = 1-(\overline\beta+\epsilon)$, so $e^{-h_g} < s < 1,$ and thus
   \[g(x) < g\left(\frac{x}{2}\right) +\left(\frac{x}{2}\right)^s < g\left(\frac{x}{4}\right)+\left(\frac{x}{2}\right)^s +\left(\frac{x}{4}\right)^s < ... < x^s\sum_{i=1}^{\infty}2^{-si} = \frac{x^s}{2^s-1}.\]
   Hence, for $r=(2^s-1)^{-1} \in (0,1)$, we have $g(x) < rx^s < x^s$.
   Thus, by iterating, we have for all $k \in \mathbb{N}$
   \[g^k(x) < x^{s^k}.\]
   Suppose $x \in (0,\delta_1)$ and $k \in \mathbb{N}$ is the minimum value such that $g^{k-1}(x)\leq\delta_1 < g^{k}(x)$. 
   This gives us
   \[\log(\delta_1) \leq g^k(x) < s^{k}\log(x).\]
  Thus,
   \begin{equation}\label{B1}
      b(x) = |\log(x)|< |\log(\delta_1)|s^{-k}. 
   \end{equation}
   Let $E \coloneq \{j \in \{1,...,n-1\} : a_{0j}=1\}$, and $M$ from (\ref{M}).
   Then,\footnote{$M$ is the least integer such that $g^M(\delta_1) \notin I_0$ and $\ell = M-i \in \{M,M-1\}$ is the least integer such that $g^{\ell}(g^k(x)) \notin I_0$.} \[\pi^{-1}(x) \in\bigcup_{i=0,1} \bigcup_{j \in E }[0^{(k+M-i)}j].\]

 Since $b$ is bounded on $(I-p)\setminus(0,\delta_1)$, it is enough to show that $\int_{(0,\delta_1)} b(x)\d\mu(x) < \infty.$
 By Lemma \ref{Gibbs} and \eqref{B1}, and with $c_2 >0$ as in \eqref{GibbsBound}, we have
 \[\int_{(0,\delta_1)} b(x)\d\mu(x) \leq c_2|E||\log(\delta_1)|\sum_{i=0,1}\sum_{k=1}^{\infty} (e^{h_g})^{i-(k+M+1)}s^{-k} .\]
   Since $M$ is constant, this sum will converge if $s > e^{-h_g}$, which is guaranteed by our choice of $\epsilon$. 
   Therefore, the MME for $(I_K,g)$ is adapted, so by Lemma \ref{adapted}, the MME for $(I_J,f)$ is adapted.

 \subsubsection{Proof of statement (2) of Theorem \ref{thrm2}}
  Suppose $\underline\beta < 1$ and $h < -\frac{1}{n}\log(1-\underline{\beta})$. Then, $1-\underline\beta < e^{-h_g}$. Let $\epsilon>0$ be chosen such that $1-(\underline\beta-\epsilon) < e^{-h_g}$.
 By the Mean Value Theorem, for every $x \in (0, \delta_1)$, there exits a $c \in (0,x)$ such that $g'(c)x = g(x)$. 
   Hence, by \eqref{bound}, 
   \[x^{-(\underline\beta-\epsilon)} < c^{-(\underline\beta-\epsilon)} < g'(c) = \frac{g(x)}{x}.\]
So by setting $t=1-(\underline\beta - \epsilon)< e^{-h_g}$ and iterating, we have $x^{t^k}<g^k(x)$ as long as $g^k(x) \in (0,\delta_1)$. 
   Suppose $x \in (0,\delta_1)$ and $k \in \mathbb{N}$ is the minimum value such that $g^k(x) < \delta_1 < g^{k+1}(x) .$ 
   This gives us that $t^k\log(x)<\log(\delta_1) $ which in turn implies  
   \begin{equation}\label{B2}
       b(x) = |\log(x)|> t^{-k}|\log(\delta_1)|.
   \end{equation}
    We also have, for $E \coloneq \{j \in \{1,...,n-1\} : a_{0j}=1\}$ and $M$ from (\ref{M}),\footnote{The value $\ell = M+i \in \{M, M+1\}$ is the least value such that $g^{\ell}(g^k(x)) \notin I_0$.}
   \[\bigcup_{k \in \mathbb{N}}\bigcup_{i=0,1}\bigcup_{j \in E}[0^{k+M+i}j] \subset \pi^{-1}\big((0,\delta_1)\big).\]
   For any fixed $j \in E$, this will code the points in $(0,\delta_1)$ that stay in $(0,\delta_1)$ for at least $k$ iterates of $g$ and whose orbit will next intersect the interval coded by $j$.
   Thus, for a fixed $j \in E$ and $i=0$, by Lemma \ref{Gibbs} and \eqref{B1}, and with $c_1 >0$ as in \eqref{GibbsBound}, we have
   \[\int b(x) \d \mu(x) \geq  c_1\sum_k |\log(\delta_1)| (e^{h_g})^{-(k+M+1)}t^{-k}.\]
   Since $M$ is fixed, this sum will diverge if $t < e^{-h_g}$, which our choice of $\epsilon$ guarantees. 
   Therefore, the MME for $(I_K,g)$ is nonadapted so by Lemma \ref{adapted}, the MME for $(I_J,f)$ is nonadapted.

\subsubsection{Proof of statement (3) of Theorem \ref{thrm2}}
  Suppose $\overline\beta = -1$. 
Since $h < \infty$, we have $0 = \overline\beta +1 < e^{-h_g}$. 
Let $\epsilon >0$ be chosen such that $\epsilon < e^{-h_g}$. 
Letting $t = \epsilon$ allows the rest of the proof of statement (2) of Theorem \ref{thrm2} to apply here verbatim.

\subsection{Proof of Theorem \ref{thm:three}}\label{Pthrm3}
 Let $f \colon I \to I$ be a piecewise $C^1$ uniformly expanding Markov map that satisfies the three conditions of Theorem \ref{thm:three}. 
   Let $\mu$ be an $f$-invariant Borel probability measure on $I$. 
   Define the minimum subinterval length by 
   \begin{equation}
   \ell \coloneq \min_{1 < i \leq m}\{x_i-x_{i-1}\}.
   \end{equation}
   Since $f$ is Markov, $\{\pi_1(\tilde{f}^n(p^+))\}_{n \in \mathbb{N}} \subset B$.
  Let $\delta = \frac{\ell}{2}$.
    Since $f$ is H\"{o}lder continuous on $B_p \coloneq [p, p +\delta]$, there exists $C_H \geq 1,$ and $\alpha > 1$ such that for $\pi_1$ as in \eqref{proj},
    \[|\pi_1(\tilde{f}(x)) - \pi_1(\tilde{f}(p^+))| <C_H|x-p|^{1/\alpha}\]
    for $x \in [p, p+ \delta]$. 
    Away from the singularity, $|f'|$ is bounded so each $\tilde{f}_i$ is Lipschitz.
Hence, there is a $C_L >1$  such that for each $i \in \{0, ..., m-1\}$ and $y, z \in I_i \setminus B_p$ we have 
\[|\pi_1(\tilde{f}(z))-\pi_1(\tilde{f}(y))|\leq C_L|z-y|.\]

  The main strategy of this proof will be to show that since $p^+$ is not periodic, points in $B_p$ close to $p$ will have orbits that stay close to the orbit of $p^+$ and thus do not reenter $B_p$ for some controlled amount of iterates. 
  To explicitly control this amount, let us partition the subinterval 
   $[p, p+ (\delta C_H^{-1})^{\alpha}] \subset B_p$ into exponential subintervals, where for each $k \in \mathbb{N}$, 
   \begin{equation}\label{deekay}
   D_k \coloneq p + (\delta C_H^{-1})^{\alpha}[C_L^{-\alpha k}, C_L^{-\alpha(k-1)}].
    \end{equation}
    Thus, if $x \in D_k$, 
    \[|f(x)-\pi_1(\tilde{f}(p^+))| \leq C_H|x-p|^{1/ \alpha} \leq C_H \delta C_H^{-1}C_L^{-k} = \delta C_L^{-k}< \delta.\]
    Also, for $2 \leq i \leq k$, 
      \[|\pi_1(\tilde{f}^i(x))-\pi_1(\tilde{f}^i(p^+))| \leq \delta C_L^{i-1-k} \leq \delta.\]
This shows for $x \in D_k$, the $\tilde{f}$ orbit of $x$ and $p^+$ project by $\pi_1$ to the same subinterval for at least $k$ iterates.
Thus, by the choice of $\delta$, we have achieved our main goal\footnote{Note that it may be that $\pi_1(\tilde{f}^i(p^+)) = p$ as in the example in Figure \ref{fig:counter}. However, if this happens, the definition of $\tilde{f}$ \eqref{tildemap} implies that $\tilde{f}^i(p^+) = p^-$ is the right endpoint of the subinterval. Thus, the subinterval containing $\pi_1(\tilde{f}^i(x))$ is not the subinterval containing $B_p$.} of showing
\begin{equation}\label{orbitout}
 \text{for } x \in D_k \text{ and }  1 \leq i \leq k, \quad \pi_1(\tilde{f}^i(x)) \notin B_p.
\end{equation}
Let $b \colon B_p \to \mathbb{R}$ be defined by $b(x) = |\log(x-p)|$ and $b_k \coloneq b|_{D_k}$.
Then, 
\[b_k(x) \leq \alpha |\log(\delta C_H^{-1} C_L^{-k})| = \alpha [ \log(C_H \delta^{-1}) + k\log(C_L)] < \infty,\]
so $b_k \in L^1(I,\mu)$.
    We have
    \begin{equation}\label{bkint}
     \int b(x) \d \mu(x) = \sum_{k=1}^{\infty} \int b_k(x) \d \mu(x).
    \end{equation}
   By \eqref{orbitout}, if $0 \leq i \leq n$, then $b_k \circ f^i(x) = 0$ for all but at most $\lfloor \frac{n}{k+1} \rfloor + 1$ values of $i$. 
    Hence, the Birkhoff averages for $b_k$ are bounded in the following way
    \[ \frac{1}{n}S_n(b_k)(x)\coloneq \frac{1}{n} \sum_{i=0}^{n-1} b_k \circ f^i(x) \leq \frac{\alpha}{n}\bigg[ \log(C_H \delta^{-1}) + k\log(C_L)\bigg]\cdot\left( \frac{n}{k+1} +1\right) \coloneq M_{n,k}.\]
    Hence,
    \begin{align}\label{Mbound}
    \begin{split}
    \lim_{n \to \infty} \frac{1}{n}S_n(b_k)(x) \leq \lim_{n \to \infty}M_{n,k} &= \frac{\alpha}{k+1}\big[ \log(C_H \delta^{-1}) + k\log(C_L)\big] \\ &\leq \alpha\log(C_LC_H\delta^{-1}).
    \end{split}
    \end{align}
    Thus, by the Birkhoff Ergodic Theorem \cite[Theorem 4.5.5]{BS}, 
    \[ \int_I b_k(x) \d \mu(x) = \int_{D_k} \lim_{n \to \infty} \frac{1}{n}S_n(b_k)(x) \d \mu(x) \leq \alpha\log(C_LC_H\delta^{-1}) \mu(D_k). \]
    Therefore, by \eqref{bkint} and \eqref{Mbound}, we have
    \[\int b(x) \d \mu(x) = \sum_{k=1}^{\infty} \int b_k(x) \d \mu(x) \leq  \alpha\log(C_LC_H\delta^{-1})\sum_{k=1}^{\infty}  \mu(D_k) < \infty,\]
    so $\mu$ is adapted.

\section{Examples and Applications}\label{examples}

\subsection{Examples}
If the topological entropy falls in the range in \eqref{indeterm}, then Theorem \ref{thrm2} does not determine whether the MME for $(I_J,f)$ is adapted or not.
Recall from Remark \ref{threshold} that if $f(x) = x^{1/\alpha}$ near a fixed singularity at $0$, $\underline\beta = \overline\beta = \frac{1}{\alpha} -1$. 
In this case, the value for the entropy, $h = \frac{1}{n}\log(\alpha)$, is in the indeterminate interval, and the MME could be either adapted or nonadapted.
To demonstrate this, we will show two examples.

\begin{example}\label{eqnonadapt}
There exists an interval map satisfying the conditions of Theorem \ref{thrm2} such that $h = \log(\alpha)$ and the MME is nonadapted.
\end{example} 
\begin{proof}
Let $f\colon I \to I$ be a uniformly expanding Markov map conjugate to the doubling map, $T(x) =2x \mod(1)$, such that 
\[f|_{\big[0,\frac{1}{16}\big]}(x) = \sqrt{x}.\] 
See (A) in Figure \ref{fig:examples} as an example.
Then, $f$ satisfies the conditions of Theorem \ref{thrm2}, has a fixed singularity at $0$, is semiconjugate to the one-sided shift on two elements, and the MME of $(I,f)$ is a Bernoulli measure with entropy $\log(2).$
We calculate, from the definition \eqref{loglim}, $\underline\beta = \overline\beta = \beta$ by the following
\[\lim_{x \to 0^+}\frac{\log((f(x))')}{-\log(x)} = \lim_{x \to 0^+}\frac{-\log(2)-\frac{1}{2}\log(x)}{-\log(x)} =\lim_{x \to 0^+}\frac{\log(2)}{\log(x)} +\frac{1}{2} = \frac{1}{2}.   \]
   Thus, $\alpha = 2 = e^{h}$.
That the MME is nonadapted follows from Proposition \ref{propeq}.
\end{proof}

\begin{example}\label{eqadapt}
There exists a map satisfying the conditions of Theorem \ref{thrm2} such that $\log(\alpha) = h$ and the MME is adapted.
\end{example}
\begin{proof}
Let \[ g(x) =\frac{1}{ \log(\log(|\log(x)|))}\]
and $0<\rho < e^{-e^e},$ so $g(\rho)<1$.
Suppose $f\colon I \to I$ is a uniformly expanding Markov map conjugate to the doubling map, $T(x) =2x \mod(1)$, such that $f(0)=0$ and $f|_{(0,\rho)}(x) = x^{\frac{1}{2-g(x)}}$. 
Then $f$ has a fixed singularity at $0$, is semiconjugate to the one-sided shift on two symbols, and the MME of $f$ is a Bernoulli measure with an entropy of  $\log(2).$

First, let us check that $f$ satisfies the limit condition
\[\lim_{x \to 0^+}\frac{\log(f'(x))}{-\log(x)} = \frac{1}{2}.\]
Note $\lim_{x \to 0^+}g(x) = 0$ and $g(x)>0$ on $(0,\rho).$
To find $f'(x)$ on $(0,\rho)$ we take 
\[\log(f(x)) = \frac{\log(x)}{2-g(x)}.\]
Taking a derivative we have
   \[\frac{f'(x)}{f(x)} = \frac{(2-g(x))x^{-1} + g'(x)\log(x)}{(2-g(x))^2}.\] 
Let $z(x) \coloneq \left(2-g(x) + xg'(x)\log(x)\right) x^{-1},$ so
 \[
     \log(f'(x)) = \log(z(x)) - 2\log(2-g(x)) + \log(f(x)) .\]
 Therefore,
     \[
     \frac{\log(f'(x))}{-\log(x)} = \frac{\log(z(x))}{-\log(x)}+2\frac{\log(2-g(x))}{\log(x)} - \frac{1}{(2-g(x))}.\]
Note that \[g'(x) = -(g(x))^2\frac{1}{\log(|\log(x)|)}\frac{1}{\log(x)}\frac{1}{x}>0 \quad \text{on} \quad (0,\rho),\] and $\lim_{x \to 0^+} x\log(x)g'(x) = 0$.
Computing the limits separately, we first have 
\[\lim_{x \to 0^+}\frac{\log(z(x))}{-\log(x)} =\lim_{x \to 0^+}\frac{\log\left(2-g(x)+xg'(x)\log(x)\right) - \log(x)}{-\log(x)} =1. \]
The other two are clear by inspection, so we have
\[\lim_{x \to 0^+}\frac{\log((f(x))')}{-\log(x)} = 1 -0-\frac{1}{2} = \frac{1}{2}.\]
This shows $\alpha = 2 = e^h.$

We now show the MME for $(I,f)$ is adapted.
Suppose $x \in (0, \rho)$ and $m \in \mathbb{N}$ is the minimum value such that $f^m(x) \leq \rho< f^{m+1}(x)$.
Then, $ f(x) =x^{1/(2-g(x))}$ and 
\[f^2(x) =\left(x^{\frac{1}{(2-g(x))}}\right)^{\frac{1}{(2-g(f(x)))}}.\]
Since $f(x)>x$ and $g'(x)>0$ we have that $2-g(f(x)) < 2-g(x)$ so
\[f^2(x) < x^{(2-g(x))^{-2}}.\]
Repeating this argument we have that
\[\rho < f^{m+1}(x) < x^{(2-g(x))^{-m-1}}.\]
Thus,
\[\log(\rho) < (2-g(x))^{-m-1}\log(x),\]
so
\begin{equation}\label{bfunk}
b(x) = |\log(x)| < |\log(\rho)|(2-g(x))^{m+1}.
\end{equation}
We also need a bound for $(2-g(x))$ so by noting that $f(x)< \sqrt x$ we have that \[\rho < f^{m+1}(x) < x^{2^{-m-1}}\]
which implies 
\begin{equation}
  (2-g(x))< 2-g(\rho^{2^{m+1}}) = 2-\frac{1}{\log\big[(m+1)\log(2)+\log(|\log(\rho)|)\big]}\coloneq \eta_m.  
\end{equation} 
Thus, by \eqref{bfunk} and with $c_2>0$ as in \eqref{GibbsBound}, we have
\begin{gather}\label{series}
\int b(x) \d \mu(x) < c_2|\log(\rho)|\sum_m 2^{-m}\eta^{m+1}_m 
\end{gather}
which is a convergent series by the following argument.

Consider the series $\sum_m (1-\frac{r}{\log(m)})^m$ for any $r>0$.
By the Cauchy Condensation Test, this series will converge if 
\begin{equation}\label{CCT}
    \sum_m a_m = \sum_m 2^m\left(1-\frac{r}{m\log(2)}\right)^{2^m}
\end{equation}
converges.
Note that 
\[\lim_{m \to \infty} \frac{1}{m}\log\left(a_m\right) =\log(2) + \lim_{m \to \infty}\frac{2^m}{m} \log\left(1-\frac{r}{m\log(2)}\right)= -\infty.\]
Thus, $a_m$ has superexponential decay.
Hence, the series (\ref{CCT}) converges.

To return to the series in $(\ref{series})$, let $r = \frac{1}{4}$ and note there exists an $M \in \mathbb{N}$ such that for $m > M$ 
\[2 \log\big[(m+1)\log(2)+\log(|\log(\rho)|)\big] < 4\log(m+1).\]
Thus, for $m>M$,
\begin{align*}
    2^{-m}\eta^{m+1}_m &= 2\left(1- \frac{1}{2\log\big[(m+1)\log(2)+\log(|\log(\rho)|)\big]}\right)^{m+1} \\
    &< 2\left(1-\frac{r}{\log(m+1)}\right)^{m+1}.
\end{align*}
Therefore, the series bounding $\int b(x) \d \mu(x)$ in (\ref{series}) converges.
Thus, the MME for $(I,f)$ is adapted.
\end{proof}
Finally, we show that if we do not require the map in Theorem \ref{thm:three} to be H\"{o}lder continuous near the singularity then it may be that not all invariant measures are adapted.

\begin{example}\label{nonpolynonadapt}
  There exists a uniformly expanding Markov interval map $f \colon I \to I$ satisfying the following:
  
  \begin{itemize}
      \item $f$ has exactly one singularity, $\frac{1}{2}^+$,
      \item $\frac{1}{2}^+$ is not periodic,
      \item $f$ is not H\"{o}lder continuous on $[\frac{1}{2},\delta]$ for any $\delta >\frac{1}{2}$,
      \item the MME of $(I,f)$ is nonadapted.
  \end{itemize}
\end{example}
Let $I = [0, 1]$.
Consider the Markov function $f \colon I \to I$ defined on two subintervals by
\[ \begin{cases} 
      f_0(x) = 2x & x \in I_0 = [0,\frac{1}{2}] \\
      f_1(x) = -\frac{\log(2)}{\log(x-\frac{1}{2})} & x \in I_1 = (\frac{1}{2}, 1] \\
      f_1(\frac{1}{2})=0.
   \end{cases}
\]
Thus, $f$ is uniformly expanding and $f_1$ has an inverse $g(x) = 2^{-1/x}+\frac{1}{2}$ for $x \in (0,1]$ and $g(0) = \frac{1}{2}$.
Let $q = \frac{1}{2}$.
Then, $f$ is not H\"{o}lder continuous on $[q,1]$ and $f(q) = 0 = f^2(q)$, so $q $ is not periodic.
The map, $f$, is semiconjugate to the full one-sided shift on two symbols and the the MME of $(I,f)$ is a Bernoulli measure which has an entropy of $\log(2)$. 
Suppose $x>q$ is coded by a sequence in $[10^n1]$ so $f(x)< q$ and $n\in \mathbb{N}$ is the minimum value such that $f^{n}(x)<q<f^{n+1}(x)$.
Then, \[2^{n-1}f(x) < q < 2^nf(x), \text{ so } f(x) < q^n.\] 
Thus, $x < g(q^n)+q = 2^{-1/q^n}+q$, so $x-q < 2^{-1/q^n}$.
Hence, 
\[b(x) = |\log(x-q)| > 2^{n}\log(2) .\]
Thus, if $\mu$ is the MME for $(I,f)$, 
\[\int b(x) \d \mu(x) > \log(2) \sum_{n=1}^{\infty}\mu([10^n1])2^n =  \log(2)\sum_{n=1}^{\infty} 2^{-n-2}\cdot2^n = \infty. \]
Therefore, the MME is nonadapted.

\subsection{Dimension of Ergodic Measures}
Consider the following definition of dimension from \cite{Led}, which is related to the upper box dimension.
Let $N(\epsilon,\delta,\mu)$ be the minimal number of balls of radius $\epsilon$ needed to cover a region of the interval with measure greater than $1-\delta$.
Then, define 
\[\dim(\mu) \coloneq \lim_{\delta \to 0} \limsup_{\epsilon \to 0} \frac{N(\epsilon, \delta, \mu)}{-\log(\epsilon)}.\]
Let $f$ be a transitive interval map satisfying the conditions of Theorem \ref{thrm2} on $I$ such that $f'$ is monotonic on each subinterval, and, for simplicity, assume $f$ has a fixed singularity at $0$.
Let $\mu$ be the MME of $(I,f)$.
Finally, suppose $\int_I \log(|f'|)\d\mu >0$. 
Then, \cite[Proposition 4]{Led} states
\begin{equation}\label{dimension}
\dim(\mu) = \frac{h(\mu)}{\int_I \log(f'(x))\d\mu(x)}.
\end{equation}
Recalling the values $\underline \beta, \overline\beta$ from \eqref{loglim} we have the following which we prove below.
\begin{itemize}
    \item[(A)] If $\underline{\beta} > 0$ and $h(\mu) >0$, then $\mu$ is nonadapted if and only if $\dim(\mu)= 0$.
    \item[(B)] If $h(\mu) >0$ and $\mu$ is adapted, then $\dim(\mu) >0$.
    \item[(C)] If $\underline{\beta} = 0$, then it is possible for a nonadapted measure to have $\dim(\mu) >0$.
\end{itemize}
If $\underline\beta > 0$ and $h(\mu)>0$, then there exist $\epsilon, \delta>0$ such that for $x \in (0, \delta),$
    \[0< \underline\beta - \epsilon < \frac{\log(f'(x))}{-\log(x)} < \overline\beta + \epsilon .\]
This implies 
\begin{equation}\label{dimmu}
    -\log(x)(\underline\beta - \epsilon) < \log(f'(x)) < -(\overline\beta +\epsilon)\log(x).
\end{equation}
Statement (A) shows that nonadapted measures are highly concentrated near the singularity.
It also implies that the interval maps with nonadapted MMEs we have been considering give examples where the Lyapunov exponent is infinite.
Neil Dobbs, in \cite[Section 11]{ND}, also examined interval maps with a parameter describing a singularity and gave examples where the MME has an infinite Lyapunov exponent.

To show (A), note if $\mu$ is nonadapted, then the integral of the left most term in \eqref{dimmu} is infinite. Hence, $\int_I \log(f'(x))\d\mu(x) = \infty$, so $\dim(\mu) = 0$ by \eqref{dimension}.
If $\dim(\mu) = 0$, $\int \log(f'(x))\d\mu(x) = \infty$ so by the right inequality in  \eqref{dimmu}, $\mu$ is nonadapted.

For (B), assume for a contradiction that $\dim(\mu) = 0.$
Since $h(\mu) \leq h_\mathrm{top} <\infty,$ we must have $\int\log(f')\d\mu = \infty$.
For any $c>1$, if there exists a $\delta>0$ such that for all $x \in (0,\delta)$, $f'(x) < x^{-c}$, then \[\int\log(f'(x))\d\mu(x) < \int c |\log(x)|\d\mu(x) = \int |\log(x^c)|\d \mu(x)<\infty,\]
where the last inequality uses adaptedness.
Thus, there exists a decreasing sequence in $(0,1)$, $\{x_i\}_{i \in \mathbb{N}}$, such that $f'(x_i) \geq x_i^{-i}.$
Furthermore, by monotonicity of $f'$, $f'(x) \geq x_i^{-i}$ for $x \in (x_{i+1},x_i]$.
Since $f(x_1) \in [0,1]$, we must have 
\begin{equation}\label{sumbound}
    1 \geq f(x_1) = \int_{(0,x_1)}f'(x)\d x \geq \sum_{i=1}^{\infty}(x_i-x_{i+1})x_i^{-i}.
\end{equation}
We must also have that $\sum_{i=1}^{\infty}(x_i-x_{i+1}) = x_1$.
We will show this is impossible.
Suppose by way of contradiction that the above holds for some sequence $\{x_i\}_{i \in \mathbb{N}}$.
Then, there exist natural numbers $N, K >2$ such that $x_{K} <  2^{-N} < x_2$.
Also, since the sum in \eqref{sumbound} is less than $1$, for all $i > K$ we have $(x_{i}-x_{i+1})x_{i}^{i} < 1$ and $x_i^i < x_{K}2^{-N(i-1)}$.
Hence, 
\begin{equation}
    \sum_{i = K+1}^{\infty}(x_i-x_{i+1}) < \sum_{i = K+1}^{\infty}x_{K}2^{-N(i-1)} = x_{K}\frac{2^{-NK}}{1-2^{-N} }< x_{K}.
\end{equation}
This contradicts the assumption that $x_i \to 0$.

(C) However, if $\underline\beta=0$, it is possible for a nonadapted measure to have positive dimension.
This does not even depend on the strength of the singularity. 
Consider the doubling map $f(x) = 2x \mod 1$. 
Here, $0$ is not a singularity according to Definition \ref{singularity}, but if we take it to be the singular point in Definition \ref{adaptedmeasure}, it is still possible to construct a $0$-nonadapted measure with positive entropy.
Since the Lyapunov exponent here is $\log(2)$, this will mean $\dim(\mu)>0$.
We will construct such an ergodic measure using a return map and a full shift on a countable alphabet.

\begin{example}\label{nonsing}
   Let $f\colon [0,1] \to [0,1]$ be defined by $f(x) = 2x\mod 1$. 
   Then, there exists an ergodic $f$-invariant $\nu$ such that $h(\nu)>0$ and $\nu$ is nonadapted with respect to $0$.
\end{example}
    
Let $X = \{0,1\}^{\mathbb{N}}$ and $Y =[1] \subset X$.
Let $\tau \colon Y \to \mathbb{N}$ be defined by $\tau(\omega) = n$ for all $\omega \in [10^{n-1}1]$, where $n \in  \mathbb{N}$.
Let $T \colon Y \to Y$ be defined by $x \mapsto \sigma^{\tau(x)}(x)$.
Let $Z =\mathbb{N}^\mathbb{N}$ and let $\overline \sigma$ denote the left shift on $Z$.
Define $\eta \colon Z \to Y$ by 
\[\eta(n_1n_2n_3...) = (10^{n_1-1}10^{n_2-1}10^{n_3-1}1...).\]
Thus, $\eta \circ \overline\sigma = T \circ\eta$.
Let $p \in [0,1]^{\mathbb{N}}$ be such that $\sum_{i \in \mathbb{N}} p_i = 1$ and $p_i \neq 0$ for all $i \in \mathbb{N}$.
Let $m$ be the Bernoulli measure on $Z$ defined by $p$.
Note that 
\begin{equation}
h_{\overline\sigma}(m) = \sum_{i \in \mathbb{N}}-p_i\log(p_i) >0.
\end{equation}
On $Y$ we have the pushforward $m_0 \coloneq \eta_*m$.
Let us define 
\[\tilde{\mu} \coloneq \sum_{n=0}^{\infty}\sigma_*^nm_0|_{\tau > n}, \]
that is, for  $\tilde\mu$ measurable $A$,
\[\tilde\mu (A) = \sum_{n=0}^{\infty}\sum_{k=n+1}^{\infty}m_0(\sigma^{-n}(A)\cap[10^{k-1}1]),\]
and $\mu = \tilde{\mu}/\tilde{\mu}(X)$.
Hence, $\mu$ will be a $\sigma$-invariant ergodic measure on $X$. 

Then, since $h_{T}(m_0) = h_{\overline\sigma}(m)$, by Abramov's formula \cite[Section 6.1.C]{Pet} we have
\[ h_{\sigma}(\mu) =h_T(m_0)\mu(Y) = h_{\overline\sigma}(m)\frac{\tilde\mu(Y)}{\tilde\mu(X)}. \]
Since \[\tilde\mu(Y) = \sum_{n=0}^{\infty}\sum_{k=n+1}^{\infty}m_0(\sigma^{-n}(Y)\cap[10^{k-1}1]) = \sum_{n=0}^{\infty}m_0([10^{n}1]) = \sum_{n=0}^{\infty}p_{n+1}=1,\]
and \[\tilde\mu(X) = \sum_{n=0}^{\infty}\sum_{k=n+1}^{\infty}m_0(\sigma^{-n}(X)\cap[10^{k-1}1])=\sum_{n=0}^{\infty}\sum_{k=n+1}^{\infty}m_0([10^{k-1}1]) = \int_Y\tau(\omega) \d m_0(\omega) ,\]
we can rewrite Abramov's formula as
\[
   h_{\overline\sigma}(m)\frac{\tilde\mu(Y)}{\tilde\mu(X)} = \frac{h_{\overline\sigma}(m)}{\int_Y\tau(\omega) \d m_0(\omega)}.
\]
Thus, $h_{\sigma}(\mu) > 0 $ if $\int_Y\tau(\omega) \d m_0(\omega)<\infty$.

To achieve this, while also ensuring $\mu$ will be nonadapted, let us define $p_n = \frac{c}{n^3}$ where $c =(\sum_{n=1}^{\infty}\frac{1}{n^3})^{-1} $ is the normalizing constant.
Thus, $\sum_{n=1}^{\infty} p_n = 1$ and 
\[ \int_Y\tau(\omega)\d m_0(\omega) = \sum_{n=1}^{\infty}np_n < \infty.\]
We now show that $\nu \coloneq \pi_*\mu$, where $\pi$ is the projection onto $I$ defined in Section \ref{coding}, is nonadapted.
Note that since $f(x) =2x \mod 1$, if $x \in (2^{-n-1},2^{-n})$, then $b(x) > n\log2$ and $\pi^{-1}(x) \subset [0^n1]$.
Thus, for $\nu$ to be nonadapted, we must show $\sum_{n=1}^{\infty}n\mu([0^n1]) = \infty.$
By invariance, $\mu([01]) = \mu([101])+\mu([0^21]) = p_2+ \mu([10^21]) + \mu([0^31])$, and so on.
Thus, 
\begin{equation}
\mu([0^n1]) = \sum_{i=n+1}^{\infty}p_i=\sum_{i=n+1}^{\infty}\frac{c}{i^3} \geq \frac{c}{2(n+1)^2}.
\end{equation}
Hence, $\sum_{n=1}^{\infty}n\mu([0^n1]) \geq \sum_{n=1}^{\infty}\frac{cn}{2(n+1)^2} = \infty.$
Therefore, $\nu$ is nonadapted.

\subsection{Interval Maps from the Geometric Lorenz Models}\label{Lorenz}

To conclude, we apply our results to interval maps induced by geometric Lorenz models.
These models were introduced in the 70s independently by V. S. Afraimovich, V. V. Bykov and L. P. Shil’nikov \cite{ABS} and by  Guckenheimer and Williams \cite{GW}.
There were motivated by the Lorenz flow.
For details, see \cite{GP}. They give a construction 
starting with the flow $(\dot x, \dot y, \dot z) = (\lambda_1x, \lambda_2y, \lambda_3z)$ on $[-1,1]^3$ such that 
\begin{equation}
    0 < \frac{\lambda_1}{2} \leq -\lambda_3 < \lambda_1 < -\lambda_2,
\end{equation}
and the Poincar\'{e} first return map to $[-\frac{1}{2}, \frac{1}{2}]^2 \times\{1\}$ induces a skew product $F \colon [\frac{1}{2}, \frac{1}{2}]^2 \to [\frac{1}{2}, \frac{1}{2}]^2$ of the form $F(x,y) = (f(x), g(x,y))$. 
Here, the Lorenz map, $f$, is odd, piecewise expanding, and for all $x$, the map $y \mapsto g(x,y)$ is contracting. 
Thus, the geometric Lorenz model is a suspension flow over the natural extension of $f$.

Since the Lorenz map is odd, we will consider a related function scaled to $[0,1]$, $f_a$, defined by $f_a(x) = 2|f(\frac{x}{2})|$ for $x > 0$.
Even though $f$ was not defined at $0$, $\lim_{x \to 0}|f(x)| = 1$ so we may define $f_a(0)=1$.
We record some facts about $f_a$ from \cite{GP}:
\begin{enumerate}
    \item $f_a$ is $C^1$ on $(0,1)$ except at the point $b \in (0,1)$ satisfying $f(b)=0$,
    \item $|f_a'(x)| = Cx^{B - 1}>1$ where $B = -\frac{\lambda_3}{\lambda_1} \in (0,1)$ and $C>0$,\footnote{See (14) on page 1710 in \cite{GP}.}
 \item $f_a(1)< 1.$
\end{enumerate}
By (2), $f_a$ is uniformly expanding and the limits in Theorem \ref{thrm2} coincide.
That is, $\underline\beta = \overline\beta =\beta$ \eqref{loglim} can be calculated to be $\beta = 1-B  =1- \frac{1}{\alpha}$ which also gives us $\alpha = - \frac{\lambda_1}{\lambda_3}>1$.

For any geometric Lorenz model as constructed by Galatolo and Pacifico, $f_a$ would have these properties.
However, in order to use Theorem \ref{thrm2}, we need $f_a$ to be a Markov map. 

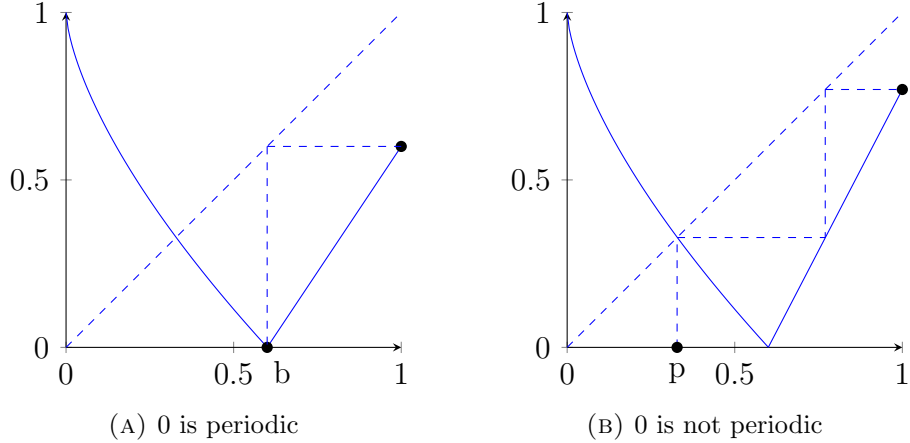
\begin{figure}
  \centering
  \begin{subfigure}{.35\linewidth}
    \centering
    \begin{tikzpicture}
\begin{axis}[axis lines = left, axis equal image, width = 7cm, xtick={0,.5,1},
    ytick={0,.5,1}, clip = false,]

    \addplot [domain=0:.6, samples=100, color=blue,]{-1.4*x^.66 +1};
    \addplot[color=blue, mark=none,dashed,] coordinates{(0,0) (1,1)};
    \addplot[color=blue, mark=none,smooth,] coordinates{(1,.6) (.6,0)};
    \node[label={[xshift=.2cm, yshift=-0.7cm]b}, circle,fill,inner sep=1.5pt] at (axis cs:.6,0) {};
    \node[circle,fill,inner sep=1.5pt] at (axis cs:1,.6,) {};
    \addplot[color=blue, mark=none,dashed,] coordinates{(.6,.6) (.6,0)};
    \addplot[color=blue, mark=none,dashed,] coordinates{(.6,.6) (1,.6)};
\end{axis}
\end{tikzpicture}
    \caption{0 is periodic}
  \end{subfigure}%
  \hspace{3em}%
  \begin{subfigure}{.35\linewidth}
    \centering
    \begin{tikzpicture}
\begin{axis}[axis lines = left, axis equal image, width = 7cm, xtick={0,.5,1},
    ytick={0,.5,1}, clip = false,]

    \addplot [domain=0:.6, samples=100, color=blue,]{-1.4*x^.66 +1};
    \addplot[color=blue, mark=none,dashed,] coordinates{(0,0) (1,1)};
    \addplot[color=blue, mark=none,smooth,] coordinates{(1,.77) (.6,0)};
    \node[circle,fill,inner sep=1.5pt] at (axis cs:1,.77,) {};
    \addplot[color=blue, mark=none,dashed,] coordinates{(.77,.77) (1,.77)};
    \addplot[color=blue, mark=none,dashed,] coordinates{(.328,0) (.328,.328)};
    \addplot[color=blue, mark=none,dashed,] coordinates{ (.328,.328) (.77,.328)};
    \addplot[color=blue, mark=none,dashed,] coordinates{ (.77,.77) (.77,.328)};
    \node[label={[yshift=-0.7cm]p}, circle,fill,inner sep=1.5pt] at (axis cs:.328,0) {};
\end{axis}
\end{tikzpicture}
    \caption{0 is not periodic}
  \end{subfigure}%
  \caption{ Examples of $f_a$ Interval maps induced by Lorenz maps.}%
   \label{fig:Lorenz Map}%
\end{figure}

The only way that this could happen to fit the definition of a Markov map given above is if $b$ is eventually periodic.
Note the orbit of $b$ is $\{b, 0, 1, ...\}$.
If $F$ was constructed such that $b$ is eventually periodic, the orbit of $b$ would determine a partition $\{0=x_0 < x_1 <  ... < x_m=1\}$.
That is, $x_i= f_a^k(b)$ for some $k \geq 0$.
Then, the transitive component, $I_J$, of $f_a$ containing $[0,x_1]$ could be coded by a SFT as described in Section \ref{coding}. 
Denote the entropy of $(I_J,f_a)$ by $h$.
There are two ways that $b$ could be eventually periodic. The first is that $f_a^n(b) =b$ for some minimum $n \in \mathbb{N}$. 
An example is shown in Figure \ref{fig:Lorenz Map} graph (A).
This corresponds to the situation where the $\{x=1/2\}$ section of the geometric Lorenz flow is in the stable manifold of $0$.
Thus, by Theorem \ref{thrm2}, if $\alpha = - \frac{\lambda_1}{\lambda_3} < e^{nh}$, the MME for $f_a$ is adapted and if $\alpha = - \frac{\lambda_1}{\lambda_3} > e^{nh}$, the MME for $f_a$ is nonadapted.
 
Otherwise, the orbit of $b$ is eventually periodic to some other point in $[0,1] \setminus \{0,b,1\}$.
In this case, the singularity would not be periodic. 
Then, since (2) implies $f$ is H\"{o}lder continuous near 0, Theorem \ref{thm:three} implies every invariant Borel probability measure for $f_a$ is adapted.
One example is that the orbit of $b$ hits the fixed point of $f_a$. 
That is, since the graph of $f$ intersects the line $y=-x$ at some $p>0$, $f_a$ will have a single fixed point. 
An example is shown in Figure \ref{fig:Lorenz Map} graph (B).
This would indicate that $\{x=1/2\}$ section of the geometric Lorenz flow enters a periodic flow forming a figure eight like shape.

\bibliographystyle{alpha}
\bibliography{Intervalmaps}

\end{document}